\documentclass[11pt]{amsart}
\usepackage{graphicx}
\usepackage{amsfonts,amsmath,amssymb,amsthm}
\usepackage{mathtools}
\usepackage[a4paper,margin=1in]{geometry} 
\usepackage{hyperref}
\usepackage{tikz}
\usepackage{comment}
\usetikzlibrary{positioning,calc}

\DeclarePairedDelimiter\norm{\lVert}{\rVert}
\DeclarePairedDelimiter\fracpart{\{}{\}}

\newcommand{\Unit}[1]{\Z_{#1}^\times}
\newcommand{\R}{\mathbb{R}}
\newcommand{\Z}{\mathbb{Z}}
\newcommand{\rbf}{\mathbf{r}}
\newcommand{\vbf}{\mathbf{v}}
\newcommand{\ubf}{\mathbf{u}}
\newcommand{\ml}{\mathrm{ML}}
\newcommand{\onetok}{(1, 2, \dots, k)}

\newtheorem{theorem}{Theorem}[section]
\newtheorem{lemma}[theorem]{Lemma}
\newtheorem{proposition}[theorem]{Proposition}
\newtheorem{corollary}[theorem]{Corollary}
\newtheorem{conjecture}[theorem]{Conjecture}
\theoremstyle{definition}
\newtheorem{definition}[theorem]{Definition}

\theoremstyle{remark}
\newtheorem{remark}[theorem]{Remark}

\title{Eleven, twelve, and thirteen lonely runners}
\author[T. Sungkawichai]{Touch Sungkawichai}
\date{\today}
\address{London, United Kingdom}
\email{touchsungkawichai@gmail.com}
\author[T. Trakulthongchai]{Tanupat Trakulthongchai}
\address{St John's College, University of Oxford, Oxford, United Kingdom}
\email{tanupat.trakulthongchai@sjc.ox.ac.uk}
\date{\today}
\subjclass[2020]{11K60}

\begin{document}
\begin{abstract}
    Wills conjectured that, for any non-zero integers $u_1,\ldots,u_k$, 
    there is a real number $t$ such that, for all $i=1,\ldots,k$, \[\lVert tu_i\rVert\geq\frac{1}{k+1},\] 
    where $\lVert x\rVert$ is the distance from $x$ to the closest integer. This statement is known as the Lonely Runner Conjecture.
    A computational method developed by Rosenfeld and the second author verified the conjecture for $k\leq9$.
    We further refine this method with new sieving techniques and employ a polynomial method argument to show that
    any $(u_1,\ldots,u_k)\equiv(1,2,\ldots,k)\pmod{p}$ with $\gcd(u_1,\ldots,u_k)=1$ satisfies the conjecture when $k+1$ and $p > k^2+k$ 
    are both odd primes. Ultimately, we provide a computer-assisted proof of the Lonely Runner Conjecture for $k\in\{10,11,12\}$.
\end{abstract}
\maketitle

\section{Introduction}
The Lonely Runner Conjecture, posed by Wills in his thesis \cite{wills1965zwei} in 1965, arose in the study of Diophantine approximation \cite{Wills}. It has been reinterpreted in the context of view obstruction and billiard trajectories 
\cite{cusick1984view}, covering radii of zonotopes \cite{henze_covering_2017}, and the chromatic number of a 
unit-distance graph \cite{zhu2002circular}. 
Writing $\norm{x}$ for the distance from $x$ to the closest integer, namely $\norm{x}=\min(x-\lfloor x\rfloor, \lceil x\rceil-x)$, 
the conjecture can be stated as follows:

\begin{conjecture} \label{lonelyconj}
  Let $u_1, \ldots,u_{k+1}$ be distinct real numbers. For each $i \in\{1,\ldots,k+1\}$, there is a real number $t$ such that
  \[ \norm{t u_i-tu_j} \ge \frac{1}{k+1} \quad \text{for all } j \ne i. \]
\end{conjecture}

Cusick's \cite{bienia1998flows} popular interpretation gives the conjecture's name: if $k+1$ runners run with distinct,
constant speeds around a unit-length track, each of them will be \emph{lonely} at some time. 
Being lonely means being at least $\tfrac{1}{k+1}$ away from other runners. 
However, the version most commonly worked with fixes one runner to be stationary. 
Additionally, it was shown in \cite{Bohman,henze_covering_2017} (and implicit in \cite{wills1968simultanen,fourrunners}) that only positive integer speeds need to be considered. Hence, Conjecture \ref{lonelyconj} is reduced
to a simpler Conjecture~\ref{lonelyconjred}:

A tuple $(u_1, \dots, u_k)$ is said to have the \emph{LR property} if there is a real number $t$ such that 
\[\norm{tu_i}\geq\frac{1}{k+1} \quad \text{for all } i = 1,\dots,k.\]
Such a time $t$ where the inequalities hold is called a \emph{witness} of the LR property of $(u_1, \dots,u_k)$.

\begin{conjecture} \label{lonelyconjred}
  Every $(u_1,\ldots,u_k)\in\Z_{>0}^k$ has the LR property.

  We denote this statement by $LRC(k)$ or, as customary, the Lonely Runner Conjecture for $k+1$ runners.
\end{conjecture}

Currently, Conjecture \ref{lonelyconjred} is known to hold if $\frac{1}{k+1}$ is replaced with
$\frac{1}{2k}+\frac{1}{k^{5/3+o(1)}}$ \cite{bedert2025riesz}.
More closely related to our approach, it is known for the cases of $k\leq9$ (that is, for at most 10 runners).
The two smallest values of $k$ are trivial. 
Betke~and~Wills~\cite{fourrunners} first proved the case $k=3$, Cusick~and~Pomerance~\cite{cusick1984view} proved $k=4$, 
and Bohman,~Holzman,~and~Kleitman~\cite{Bohman} proved $k=5$. Barajas~and~Serra~\cite{barajas_lonely_2008} 
settled the case $k=6$ in 2008.

This had remained the largest verified case prior to September 2025, when Rosenfeld~\cite{Rosenfeld} introduced a new 
framework to handle $k=7$ (and later $k=8$ \cite{rosenfeld2025lonely2}). 
He demonstrated that, with computer assistance, the conjecture can be verified with a tractable finite calculation.
In doing so, he relied on the finite-checking result of Malikiosis, Santos, and Schymura~\cite{Malikiosis}.
The~second~author~\cite{Trakulthongchai} then, in November 2025, 
presented a sieving scheme that substantially reduces the work for verification, thereby proving the conjecture for $k\in\{8,9\}$. 

The main objective of this paper is to prove $LRC(k)$ for $k\in\{10,11,12\}$.
\begin{theorem} \label{thm:main}
    $LRC(k)$ holds for $k\leq12$.
\end{theorem}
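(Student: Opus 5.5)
Since $LRC(k)$ is already known for $k\le 9$, the plan is to prove $LRC(k)$ for $k\in\{10,11,12\}$ using the Rosenfeld framework as refined by the second author. The argument runs by contradiction. We fix $k$ and suppose $(u_1,\dots,u_k)\in\Z_{>0}^k$ is a counterexample. By the standard reductions we may assume $\gcd(u_1,\dots,u_k)=1$ and the $u_i$ are distinct.

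The first step is to bound the product of the speeds. We would take a minimal counterexample and invoke the finite-checking result of Malikiosis, Santos and Schymura, as Rosenfeld does. This gives an explicit bound $B_k$ such that some counterexample has $\prod_i u_i\le B_k$. Hence every prime $p$ above a computable threshold fails to divide any $u_i$.

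The second step is to pin down the residues of a counterexample modulo a suitable prime.
\begin{itemize}
\item For a prime $p\nmid \prod u_i$, consider the times $t=a/p$ with $a=1,\dots,p-1$.
\item If none of these is a witness, then for every $a$ some $i$ has $\norm{au_i/p}<1/(k+1)$.
\item This is a covering condition on the residues $u_i \bmod p$. For $p$ large compared with $k$, a counting/sieve argument (as in the second author's sieve) should force $(u_1,\dots,u_k)\equiv \lambda(1,2,\dots,k)\pmod p$ up to permutation.
\item Rescaling by $\lambda^{-1}$, we get $(u_1,\dots,u_k)\equiv (1,2,\dots,k)\pmod p$.
\end{itemize}
We would run this sieve by computer over many primes $p$ simultaneously, discarding residue patterns that admit a witness of the form $a/p$, with the new sieving techniques used to prune.

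The third step closes the remaining case with the abstract's polynomial-method result. When $k+1$ is an odd prime and $p>k^2+k$ is prime, every $(u_i)\equiv(1,\dots,k)\pmod p$ with $\gcd 1$ has the LR property. The idea is to look at times $t=b/(k+1)+c/p$. One considers the polynomial
\[\prod_i\bigl(\text{forbidden-interval indicators}\bigr)\]
over $\mathbb{F}_p$, or applies Combinatorial Nullstellensatz / Chevalley–Warning to show that these times cannot all be covered.

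This step directly handles $k\in\{10,12\}$, since $11$ and $13$ are prime. For $k=11$ ($k+1=12$) we would instead rely on the computational sieve alone, using primes $p$ where the residue structure already yields a contradiction. Alternatively, we would derive the case from $k=12$ by adding a suitable auxiliary speed, though such monotonicity is not automatic, so we prefer direct computation.

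We expect the main obstacle to be the computation itself: making the sieve terminate for $k=12$, where $B_k$ is astronomically large. The crux is to choose the primes so that the only surviving residue classes are exactly the $(1,\dots,k)$ pattern, which the polynomial-method proposition then eliminates.
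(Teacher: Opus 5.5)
\textbf{Gaps.} Your outline has the right ingredients, but two of its links fail as written.

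\textbf{Linking the product bound to the primes.} A bound $u_1\cdots u_k<B_k$ only tells you that primes $p\ge B_k$ cannot divide the $u_i$. Since $\ln B_k$ is roughly $338$, $435$, $546$ for $k=10,11,12$, no sieve can be run at such a prime. The argument has to go the other way (Corollary~\ref{cor:evenpropdivp}, Proposition~\ref{prop:verify}). For each $p$ in a set $P$ of moderate primes (at most $733$ in the paper), one shows that \emph{every} counterexample satisfies $p\mid u_1\cdots u_k$. Then $\prod_{p\in P}p$ divides $u_1\cdots u_k$, and choosing $P$ with $\prod_{p\in P}p\ge B_k$ contradicts Lemma~\ref{lem:proddivB}. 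Equivalently, some $p\in P$ does not divide $\prod_i u_i$, and only at that $p$ can your Step 2 be invoked.

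\textbf{The sieve itself.} This is the more serious gap. Step 2 uses only the times $a/p$, i.e.\ the $(k,p,1)$-ansatz, and that cannot do the job. By Remark~\ref{rem:tightset}, $\onetok$ has no witness in $\frac{1}{lp}\Z$ whenever $p>k+1$ and $k+1\nmid l$. So for $k=11$ the residue pattern $(1,\dots,11)$ survives at every such prime, and no ``residue structure mod $p$'' ever yields a contradiction. Since $12$ is not prime, your Step 3 is also unavailable, so the case $k=11$ is left open.

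The missing idea is lifting. You work with residues modulo $lp$ and times in $\frac{1}{lp}\Z$, and discard tuples satisfying the gcd clause of Definition~\ref{def:improper}. That clause is exactly where $LRC(k-1)$ enters (Lemma~\ref{lem:divp}); your plan never uses $LRC(k-1)$ beyond the product bound. The paper organizes this through eventual properness (Lemma~\ref{lem:evenprop}) and backward projection (Proposition~\ref{prop:liftproj}).
\begin{itemize}
\item For $k=11$ it lifts $\times2$ four times and $\times3$ twice, reaching $l=144$.
\item For $k\in\{10,12\}$, even the reduction to the orbit of $\onetok$ comes from three $\times2$ lifts and a $\div 8$ projection. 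It is a per-prime computer result, not the output of a counting argument. Indeed, a version of your Step 2 valid for all large $p$ would already imply $LRC(k)$ by Proposition~\ref{prop:lrcimpliestight}.
\end{itemize}

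\textbf{Smaller repairs.}
\begin{itemize}
\item The symmetry group also contains sign flips, not just permutations and scaling.
\item You cannot ``rescale'' an integer counterexample by $\lambda^{-1}$ without destroying $\gcd=1$, which Proposition~\ref{prop:onetokintro} needs. Use Proposition~\ref{prop:equivproper} (a CRT unit $b\equiv\lambda^{-1}\pmod p$, $b\equiv 1\pmod l$) instead, or rerun the proof of Proposition~\ref{prop:corlarge} with $r$ replaced by $\lambda^{-1}r$.
\item The polynomial identity behind Step 3 lives in $\Z_{k+1}$ (Proposition~\ref{prop:polynomial}), not in $\mathbb{F}_p$. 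The prime $p$ enters only through the spacing argument for $\rbf_k$, which is what requires $p>k(k+1)$ (Lemma~\ref{lem:resolvep}).
\end{itemize}
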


In order to carry out the proof, we improve upon the framework of \cite{Rosenfeld} and \cite{Trakulthongchai}
both mathematically and computationally. We refine the sieving scheme by adding intermediate sieves and exploit a modulo
$p$ symmetry during implementation to reduce the verification necessary. These were enough to prove $k=11$. 
Furthermore, we employ the polynomial method to establish the following proposition that aids the proof for $k\in\{10,12\}$.
\begin{proposition} \label{prop:onetokintro}
    Let $k+1$ and $p > k^2 + k$ be odd primes and $(u_1,\ldots,u_k)\in\Z_{>0}^k$.
    If $\gcd(u_1,\ldots,u_k)=1$ and 
    \[u_i\equiv i\pmod{p} \quad \text{for all } i = 1,\dots,k,\] 
    then $(u_1,\ldots,u_k)$ has the LR property.
\end{proposition}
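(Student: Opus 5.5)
The plan is to build a witness of the special form $t=\frac{r}{k+1}+\frac{c}{p}$ and then choose $r$ and $c$ separately. Write $n=k+1$. Since $u_i\equiv i\pmod p$, for every integer $c$ and every $r\in\Z_n$ we have
\[
tu_i\equiv \frac{ru_i \bmod n}{n}+\frac{ci \bmod p}{p}\pmod 1 .
\]
So $tu_i$ depends only on the residues $a_i(r):=ru_i \bmod n$ and on the known values $ci \bmod p$. For $t$ to be a witness we need each point $\frac{a_i(r)}{n}+\frac{ci}{p}$ to lie in $[\frac1n,\frac{n-1}n]$ modulo $1$.

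The second coordinate gives a controlled small shift. Choose $c\ge 1$ with $ck<p/n$, so that $0<\frac{ci}{p}<\frac1n$ for all $i\le k$. The hypothesis $p>k^2+k=kn$ is exactly what makes $c=1$ admissible, and I expect this is where the bound on $p$ enters. With such a shift:
\begin{itemize}
\item residues $a_i\in\{1,\dots,n-2\}$ are automatically good, since the point lies in $[\frac{a_i}{n},\frac{a_i+1}{n})\subset[\frac1n,\frac{n-1}n]$;
\item a residue $a_i=n-1$ is bad;
\item a residue $a_i=0$ is bad unless the shift is large enough.
\end{itemize}
Because the small shift alone does not fix the residue $0$, I would actually let $c$ range more freely. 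The condition for index $i$ then becomes "$\frac{ci}{p}$ avoids an arc of length $\frac2n$ determined by $a_i(r)$". This is a condition on the pair $(r,c)\in\Z_n\times\Z_p$ given by $k$ forbidden arithmetic conditions.

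The core step is to show that some pair $(r,c)$ satisfies all $k$ conditions, and here I would use the polynomial method over $\mathbb{F}_n$, which is where primality of $n$ is used. Fix a suitable $c$, for instance the one making $\frac{ci}{p}$ sweep an interval of length just under $1$ as $i$ runs over $1,\dots,k$. Encode "index $i$ is bad" as $ru_i\in B_i$, where $B_i\subset\mathbb{F}_n$ is a set of at most two residues. Then consider
\[
F(r)=\prod_{i:\,n\nmid u_i}\ \prod_{b\in B_i}\bigl(ru_i-b\bigr)
\]
as a polynomial in $r$ over $\mathbb{F}_n$. If $F$ is nonzero of degree less than $n$ with a nonvanishing suitable coefficient, Combinatorial Nullstellensatz (or simply a degree count against $|\mathbb{F}_n|=n$) gives an $r$ with $F(r)\ne0$. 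Such an $r$ makes every index with $n\nmid u_i$ good.

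A pure degree count is too weak: the degree can be up to $2k>n$. So the real work is to exploit the structure $u_i\equiv i$. After choosing $c$ well, the forbidden residue sets $B_i$ should reduce to essentially one residue per $i$, following the pattern of the classical witness $t=\frac1n$ for $(1,\dots,k)$. The indices with $n\mid u_i$ have $a_i=0$ for every $r$. For these, the condition is purely on $\frac{ci}{p}$, and it will be arranged by the choice of $c$. The hypothesis $\gcd(u_1,\ldots,u_k)=1$ guarantees that not every $u_i$ is divisible by $n$, so the $r$-part is nontrivial.

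The main obstacle I expect is this coupling: choosing $c$ so that simultaneously the $n$-divisible indices are handled and the remaining residue constraints shrink enough for the Nullstellensatz coefficient computation to go through. I would first try $c=1$. If that fails, I would try $c$ ranging over a short interval, summing $F$ or taking a product over $c$ as a second variable and applying the two-variable Nullstellensatz on $\mathbb{F}_n\times\{c\text{-values}\}$.
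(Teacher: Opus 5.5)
Your setup agrees with the paper's. You seek the witness as $t=\frac{r}{k+1}+\frac{c}{p}$, each index forbids two residues of $ru_i \bmod (k+1)$, the gcd hypothesis excludes $(k+1)\mid u_i$ for all $i$, and $t=\frac{1}{k+1}$ handles the case where no $u_i$ is divisible by $k+1$. But the core step is left as a hope: finding $(r,c)$ when some, but not all, $u_i$ are divisible by $k+1$. Neither mechanism you suggest delivers it.

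\begin{itemize}
\item The expectation that a good $c$ shrinks each $B_i$ to one residue is false. For $p\nmid c$, $\fracpart{ci/p}$ has denominator $p>k+1$, so it is never a multiple of $\frac{1}{k+1}$. Hence the open bad arc of length $\frac{2}{k+1}$ always contains exactly two of the points $\frac{a}{k+1}+\fracpart{ci/p}$. Singletons occur only for $c\equiv 0\pmod p$, and then every index with $(k+1)\mid u_i$ fails.
\item A two-variable Nullstellensatz ``in $c$'' is not set up. It would need $c\mapsto\lfloor (k+1)\fracpart{ci/p}\rfloor$ to be a low-degree polynomial in some parameter over $\mathbb{F}_{k+1}$, and you do not say how to arrange that.
\end{itemize}

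The missing idea is to let the $p$-part imitate a time $\frac{q}{k+1}$. The map $\rbf_k$ is constant on $[\frac{q}{k+1},\frac{q}{k+1}+\frac{1}{k(k+1)})$, and $p>k(k+1)$ puts some $\frac{c}{p}$ in that interval. So for every $q$ there is $c$ with $\lfloor (k+1)\fracpart{ci/p}\rfloor\equiv qi\pmod{k+1}$ for all $i$. This is where the bound on $p$ really enters; your $c=1$ is just the case $q=0$.

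This reduces everything to a statement over $\mathbb{F}_{k+1}$: for $\vbf\equiv\ubf\pmod{k+1}$, nonzero with a zero coordinate, find units $r,q$ with $rv_i+qi\notin\{0,-1\}$ for all $i$. Even this is not settled by a naive coefficient check. For $k=4$ and $\vbf=(1,0,0,0)$, the top-degree part of $\prod_i(rv_i+qi)(rv_i+qi+1)$ over $\mathbb{F}_5$ is $q^6(r+q)^2$. It has no $r^4q^4$ term, so Alon's Nullstellensatz on $\mathbb{F}_5^2$ does not apply directly.

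The paper argues instead with $m=qr^{-1}$, and this is the genuine form of your ``one residue per index'' intuition.
\begin{itemize}
\item Call $m$ good if no $v_i+mi=0$. For good $m$ the residue $0$ is excluded for every $r$ at once. So if every unit $r$ fails, then $\{v_i+mi\}=\Unit{k+1}$, and hence $\prod_i(v_ii^{-1}+m)=1$.
\item Therefore $P(X)=\prod_i(v_ii^{-1}+X)$ is the indicator of the good set $G$ on all of $\Z_{k+1}$, with $P(0)=0$ coming from the zero coordinate.
\item Comparing leading coefficients with $\sum_{m\in G}\bigl(1-(X-m)^k\bigr)$ gives $|G|=k$, which forces $\vbf=\mathbf 0$.
\end{itemize}
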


Our proof strategy is outlined in the next section. Section \ref{sec:sieves} explains the intermediate sieves 
technique and Section \ref{sec:onetok} is devoted to proving Proposition \ref{prop:onetokintro}. 
We describe the verification algorithms and their implementation in Section \ref{sec:algo}, 
the results of which establish Theorem \ref{thm:main} in Section \ref{sec:result}.
Lastly, we discuss in Section~\ref{sec:conclude} some reasons why the sieves are effective and how the method in this paper may be further extended.

We adopt the following notation. We write $aS=\{as:s\in S\}$ for $a\in\R$ and $S\subseteq\R$. 
The additive group modulo $n$ is denoted $\Z_{n}$, the multiplicative group modulo $n$ is $\Unit{n}$, and 
\[\Z_{p, l}=\{x\in\Z_{lp}: x\not\equiv0\pmod{p}\}.\]
There will be numerous occasions where we treat the standard representatives $\{0,1,\ldots,n-1\}$ in $\Z_n$ as integers, but it should always be clear when this happens.

Let $\pi_{m \to n}:\Z_{m} \to \Z_{n}$ denote the natural projection map when $n\mid m$
and $\pi_n: \Z \to \Z_{n}$ denote the projection modulo $n$ of integers.
The fractional part of $x\in\R$ is denoted by $\fracpart{x}$.

We use boldface letters for vectors, subscripts to indicate coordinates of a vector, 
and $\widehat{\cdot}$ to indicate the omission of a certain coordinate in a vector. 
For example, $u_i$ is the $i^\text{th}$ coordinate of the vector $\ubf$.
We refer to a vector $\ubf$ or $\vbf$ as a \emph{speed tuple} to match the conjecture's settings. 
The application of a scalar function to vectors is coordinate-wise. 

\section{Proof strategy}

Rosenfeld's \cite{Rosenfeld} key insight is that if $\ubf\equiv\vbf\pmod{lp}$, 
then $\norm{tv_i}=\norm{tu_i}$ for every $i$ whenever $t$ is a rational with denominator $lp$. 
In particular, if we know that $\vbf\in\Z_{lp}^k$ has a witness time of the form $t\in\frac{1}{lp}\Z$,
then every $\ubf\in\pi^{-1}_{lp}(\vbf)$ has the same witness time and thus the LR property.
If every $\vbf\in\Z_{p,l}^k$ has a witness time of the form $t\in\frac{1}{lp}\Z$,
then one of the speeds in any counterexample to $LRC(k)$ must be divisible by $p$. The set of speed tuples $\Z_{p,l}^k$ together with the set of times $\frac{1}{lp}\Z$ is what we informally call an \emph{ansatz}.
The following definition and lemma summarize what must be verified.

\begin{definition}[Definition 3\footnote{We change $lp$ in the gcd condition of \cite{Trakulthongchai} to $l$,
  but this changes nothing when $p$ is prime.}, \cite{Trakulthongchai}]\label{def:improper}
    Let $p$ be prime, and $k \geq 2$ and $l$ be positive integers.
    A speed tuple $\vbf\in\Z_{p,l}^k$ is $(k,p,l)$-\emph{proper} if one of the following conditions holds: 
  \begin{itemize}
    \item there exists $i$ such that $\gcd(l, v_1, \dots, \widehat{v_{i}}, \dots, v_k) > 1$
    \item there exists $t \in\frac{1}{lp}\Z$ such that $\norm{tv_i} \geq \frac{1}{k+1}$ for all $i$.
  \end{itemize}
   If $\vbf$ is not proper, it is \emph{improper}. Moreover, let $I(k,p,l)$ denote the set of all improper speed tuples.
\end{definition}

\begin{lemma}[Lemma 5, \cite{Trakulthongchai}] \label{lem:divp}
    For $k\geq3$ and prime $p$, if $LRC(k-1)$ is true and $I(k,p,l)$ is empty for some $l$, then any counterexample
    $\ubf$ to $LRC(k)$ satisfies $p \mid u_1 \cdots u_k$. 
\end{lemma}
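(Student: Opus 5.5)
Suppose, for contradiction, that $\ubf\in\Z_{>0}^k$ is a counterexample to $LRC(k)$ with $p\nmid u_1\cdots u_k$. The plan is to reduce $\ubf$ modulo $lp$ and then use that $I(k,p,l)$ is empty. Each of the two conditions in Definition~\ref{def:improper} will give a contradiction: the first through $LRC(k-1)$, the second directly. First I normalize: the LR property is invariant under dividing all speeds by a common factor, since a witness $t$ for $\ubf/g$ gives the witness $t/g$ for $\ubf$. So I may assume $\gcd(u_1,\dots,u_k)=1$. Put $\vbf=\pi_{lp}(\ubf)$. Since no $u_i$ is divisible by $p$, we have $\vbf\in\Z_{p,l}^k$. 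Because $I(k,p,l)=\emptyset$, the tuple $\vbf$ is $(k,p,l)$-proper.

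\emph{Second condition.} Suppose there is $t=a/(lp)$ with $\norm{tv_i}\ge\frac1{k+1}$ for all $i$. Since $u_i\equiv v_i\pmod{lp}$, we get $tu_i-tv_i\in\Z$, hence $\norm{tu_i}=\norm{tv_i}$. So $t$ is a witness for $\ubf$, a contradiction. This is exactly Rosenfeld's observation.

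\emph{First condition.} Suppose some index $i$ satisfies $\gcd(l,v_1,\dots,\widehat{v_i},\dots,v_k)>1$, and pick a prime $q$ dividing this gcd. Then $q\mid l$, so $q\mid lp$, and from $u_j\equiv v_j\pmod{lp}$ we get $q\mid u_j$ for all $j\ne i$. Since $\gcd(\ubf)=1$, it follows that $q\nmid u_i$. Apply $LRC(k-1)$ to the positive integer tuple $(u_j/q)_{j\ne i}$. This gives $s$ with $\norm{s u_j/q}\ge\frac1k\ge\frac1{k+1}$ for all $j\ne i$. Now consider the times $t_m=(s+m)/q$ for $m=0,\dots,q-1$. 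For $j\ne i$, $t_m u_j = s u_j/q + m(u_j/q)$ differs from $su_j/q$ by an integer, so these coordinates stay good. For coordinate $i$, the values $t_m u_i \bmod 1$ are $su_i/q + mu_i/q$. Since $q\nmid u_i$, as $m$ varies they run over a full coset of $\frac1q\Z$ modulo $1$, with consecutive points spaced $1/q$ apart. The closed arc $[\frac1{k+1},\frac{k}{k+1}]$ has length $\frac{k-1}{k+1}\ge\frac12\ge\frac1q$, using $k\ge3$ and $q\ge2$. Hence some $m$ puts $t_mu_i$ in this arc modulo $1$, i.e.\ $\norm{t_mu_i}\ge\frac1{k+1}$. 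That $t_m$ witnesses the LR property of $\ubf$, a contradiction.

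The only step needing care is this last lifting argument. The key points are the normalization $\gcd(\ubf)=1$, which guarantees $q\nmid u_i$, and the length comparison, which is precisely where the hypothesis $k\ge3$ enters. Everything else is the congruence bookkeeping between $\ubf$ and $\vbf$.
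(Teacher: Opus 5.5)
Your proof is correct and takes essentially the same route as the argument the paper relies on for this cited lemma. You reduce the gcd-normalized counterexample modulo $lp$, transfer a witness time in $\frac{1}{lp}\Z$ back to $\ubf$ via Rosenfeld's observation, and handle the gcd condition with the pre-jump technique of \cite[Lemma 3]{Rosenfeld}, which your shift-by-$1/q$ argument reproves inline and which is exactly where $k\ge3$ is used.
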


Lemma~\ref{lem:divp} translates proving Conjecture \ref{lonelyconjred} to the computer verification task
of showing that, for many primes $p$, $I(k,p,l_p)=\emptyset$ for some $l_p$.
Since the choice of $l$ is irrelevant to the final conclusion, 
we introduce a new terminology to simplify our discussion and implementation. 
\begin{definition}
  \label{def:eventualproper}
  Let $k \geq 2$ be an integer and $p$ be a prime. 
  A speed tuple $\vbf \in \Z_{p,1}^k$ is said to be \emph{eventually $(k,p)$-proper}
  if there is an integer $l$ such 
  that \[ \vbf \notin \pi_{lp \to p} I(k, p, l). \]
  Let $J(k,p)$ denote the set of all $\vbf \in \Z_{p,1}^k$ that are not eventually $(k,p)$-proper.
\end{definition}

Eventual properness might appear to be a local property of each speed tuple $\vbf$,
but having this local property for every $\vbf$ implies the global conclusion that $I(k,p,l)=\emptyset$ for some $l$.

\begin{lemma}
\label{lem:evenprop}
    For fixed integer $k \ge 2$ and prime $p$, $J(k,p)=\emptyset$ if and only if $I(k,p,l)=\emptyset$ for some integer $l$.
\end{lemma}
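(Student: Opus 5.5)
The plan is to prove the two implications separately. The reverse direction is immediate. The forward direction uses the finiteness of $\Z_{p,1}^k$ to pick one common modulus, and then lifts properness from a smaller modulus to a larger one.

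\emph{($\Leftarrow$).} Suppose $I(k,p,l)=\emptyset$ for some $l$. Then $\pi_{lp\to p} I(k,p,l)=\emptyset$. Hence every $\vbf\in\Z_{p,1}^k$ lies outside this image, so every $\vbf$ is eventually $(k,p)$-proper with this same $l$. Therefore $J(k,p)=\emptyset$.

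\emph{($\Rightarrow$).} Suppose $J(k,p)=\emptyset$. For each $\vbf\in\Z_{p,1}^k$, fix an integer $l_\vbf$ with $\vbf\notin \pi_{l_\vbf p\to p} I(k,p,l_\vbf)$. Since $\Z_{p,1}^k$ is finite, we may set $L=\operatorname{lcm}\{l_\vbf : \vbf\in\Z_{p,1}^k\}$, and we claim $I(k,p,L)=\emptyset$.

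Take any $\mathbf{w}\in\Z_{p,L}^k$, and let $\vbf=\pi_{Lp\to p}(\mathbf{w})$, $l=l_\vbf$. Since $l\mid L$, we have $lp\mid Lp$, so we may form $\mathbf{w}'=\pi_{Lp\to lp}(\mathbf{w})$. Reduction modulo $lp$ does not change residues modulo $p$, so $\mathbf{w}'\in\Z_{p,l}^k$ and $\pi_{lp\to p}(\mathbf{w}')=\vbf$. By the choice of $l$, the tuple $\mathbf{w}'\notin I(k,p,l)$, i.e.\ $\mathbf{w}'$ is $(k,p,l)$-proper. We transfer each of the two alternatives in Definition~\ref{def:improper} to $\mathbf{w}$.

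\begin{itemize}
\item \emph{Gcd alternative.} Suppose a prime $q$ divides $l$ and all $w'_j$ with $j\neq i$. Since $w_j\equiv w'_j \pmod{lp}$ and $q\mid lp$, the prime $q$ also divides each $w_j$ with $j\ne i$. Since $q\mid l\mid L$, we get $\gcd(L,w_1,\dots,\widehat{w_i},\dots,w_k)>1$.
\item \emph{Witness alternative.} Suppose $t=a/(lp)$ satisfies $\norm{tw'_i}\ge \frac1{k+1}$ for all $i$. Then $t=(aL/l)/(Lp)\in\frac1{Lp}\Z$. Moreover, $t w_i - t w'_i = (w_i-w'_i)a/(lp)\in\Z$, so $\norm{tw_i}=\norm{tw'_i}\ge\frac1{k+1}$ for all $i$. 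Thus $t$ is a witness for $\mathbf{w}$ in $\frac1{Lp}\Z$.
\end{itemize}

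In either case $\mathbf{w}$ is $(k,p,L)$-proper, so $I(k,p,L)=\emptyset$.

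The only delicate point is bookkeeping with integer representatives in the gcd condition and in $\norm{tw_i}$. Both quantities depend only on the residue modulo $lp$: the gcd with $l$ because $l\mid lp$, and $\norm{tw_i}$ because $t$ has denominator $lp$. Checking that these invariances hold is the step that needs care; the rest is formal.
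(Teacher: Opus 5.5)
Your proof is correct and follows the paper's argument essentially step for step. You take $L$ as the lcm of the $l_{\vbf}$, reduce $\mathbf{w}$ to modulus $l_{\vbf}p$, and transfer the gcd and witness alternatives back up to modulus $Lp$. The only difference is that you work with a prime divisor $q$ where the paper uses the gcd $d$ itself, which changes nothing.
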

\begin{proof}
    The backward direction is immediate from the definition. 
    To prove the forward direction, note that $J(k,p)=\emptyset$ implies that,
    for each $\vbf\in \Z_{p,1}^k$, there is $l_\vbf$ for which $\vbf \notin \pi_{l_\vbf p \to p} I(k, p, l_\vbf)$. 
    Then pick \[l=\operatorname{lcm}\{l_\vbf:\vbf\in\Z_{p,1}^k\}.\] 
    
    Fix $\ubf\in\Z_{p,l}^k$. 
    Let $\vbf=\pi_{lp\rightarrow p}\ubf$ and $\vbf'=\pi_{lp\rightarrow l_\vbf p}\ubf$,
    so we can write \[\ubf=(a_1l_\vbf p+v_1',\ldots,a_kl_\vbf p+v_k')\] for some integers $a_1,\ldots,a_k$. 
    
    Notice that $\pi_{l_{\vbf}p \to p} \vbf' = \vbf$. 
    By definition of $l_\vbf$, $\vbf'$ is $(k,p,l_\vbf)$-proper. 
    If $\vbf'$ satisfies the gcd condition,
    then there is $i$ such that $d:=\gcd(l_\vbf, v'_1, \dots, \widehat{v'_{i}}, \dots, v'_k) > 1$. 
    It follows that $d\mid l$ and $d\mid u_j$ for all $j\neq i$, hence $\ubf$ also satisfies the gcd condition.

    Otherwise, there is $t\in\frac{1}{l_\vbf p}\Z$ such that $\norm{tv'_i}\geq\frac{1}{k+1}$ for all $i$. 
    Then observe that $\frac{1}{l_\vbf p}\Z\subseteq\frac{1}{lp}\Z$, so $t\in\frac{1}{lp}\Z$,
    and $\norm{tu_i}=\norm{tv'_i}\geq\frac{1}{k+1}$ for all $i$.

    In both cases, we have shown $\ubf$ to be $(k,p,l)$-proper.
    Therefore, $I(k,p,l)=\emptyset$.
\end{proof}

As a consequence, the emptiness of $J(k,p)$ and $I(k,p,l)$ at some $l$ may be discussed interchangeably.
We may also restate Lemma~\ref{lem:divp} in a more usable form for the proof of our main Theorem \ref{thm:main}.
\begin{corollary}\label{cor:evenpropdivp}
  Let $k \ge 3$ and $p$ be prime.
  If $LRC(k-1)$ is true and $J(k,p)$ is empty, then any counterexample $\ubf$ to $LRC(k)$ has the property that $p \mid u_1\cdots u_k$.
\end{corollary}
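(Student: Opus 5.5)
The plan is to combine Lemma~\ref{lem:evenprop} with Lemma~\ref{lem:divp}; no new mathematics is needed.

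Assume $LRC(k-1)$ holds and $J(k,p)=\emptyset$, with $k\ge 3$ and $p$ prime. First apply the forward direction of Lemma~\ref{lem:evenprop}: since $J(k,p)$ is empty, there is a positive integer $l$ with $I(k,p,l)=\emptyset$. Concretely, $l$ is the least common multiple of the witnessing integers $l_\vbf$ over the finitely many $\vbf\in\Z_{p,1}^k$; this set is finite, so $l$ is a well-defined positive integer.

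Next, the hypotheses of Lemma~\ref{lem:divp} are now met exactly: $k\ge3$, $p$ is prime, $LRC(k-1)$ is true, and $I(k,p,l)$ is empty for this $l$. That lemma then says every counterexample $\ubf$ to $LRC(k)$ satisfies $p\mid u_1\cdots u_k$, which is the claim.

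The only point needing care is that the definitions line up. Definition~\ref{def:improper} as used here has the modified gcd condition, with $l$ in place of $lp$. The footnote records that this is harmless for prime $p$, so Lemma~\ref{lem:divp}, quoted from \cite{Trakulthongchai}, applies to $I(k,p,l)$ as defined in this paper. Granting that, the corollary is a two-line chaining of the two lemmas, and there is no real obstacle.
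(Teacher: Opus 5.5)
Your proposal is correct and matches the paper's reasoning: the paper gives no separate proof, presenting the corollary as an immediate restatement of Lemma~\ref{lem:divp} once the forward direction of Lemma~\ref{lem:evenprop} turns $J(k,p)=\emptyset$ into $I(k,p,l)=\emptyset$ for some $l$. Your point that replacing $lp$ by $l$ in the gcd condition is harmless for prime $p$ is the same one the paper makes in its footnote to Definition~\ref{def:improper}.
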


Finally, we require an upper bound on the runners' speeds beyond which the LR property is guaranteed.
The best such bound at present was obtained by Malikiosis,~Santos,~and~Schymura~\cite[Theorem 1.1]{Malikiosis}:
\begin{lemma}[Corollary 3, \cite{Rosenfeld}] \label{lem:proddivB}
    For $k \ge 3$, if $LRC(k-1)$ holds, then any counterexample $\ubf$ with $\gcd(u_1,\ldots,u_k)=1$ to $LRC(k)$ must have
    \[ u_1\cdots u_k < B_k \quad \text{where} \quad B_k := \left( \frac{\binom{k+1}{2}^{k-1}}{k} \right)^k. \]
\end{lemma}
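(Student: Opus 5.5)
The plan is to obtain this bound directly from the finite-checking theorem of Malikiosis, Santos, and Schymura~\cite[Theorem 1.1]{Malikiosis}, using the statement that bounds each speed individually rather than their product. As I recall it, that theorem says: if $LRC(k-1)$ holds, then any $\ubf\in\Z_{>0}^k$ with $\gcd(u_1,\ldots,u_k)=1$ that fails the LR property has every coordinate bounded by a quantity of the form $\binom{k+1}{2}^{k-1}/k$. The first step is to restate that theorem in the normalization used here: $k$ positive integer speeds with one runner stationary, loneliness threshold $\frac{1}{k+1}$, and $LRC(k-1)$ as the inductive hypothesis. This requires checking how \cite{Malikiosis} indexes the number of runners ($n$ versus $k$ versus $k+1$).

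Next I will confirm that a ``counterexample $\ubf$ with $\gcd(u_1,\ldots,u_k)=1$ to $LRC(k)$'' is exactly an object their theorem applies to. The coordinates should be positive integers, and distinctness or the absence of zero speeds should follow from the reduction already quoted from \cite{Bohman,henze_covering_2017}. I will also check that their inductive hypothesis matches ours. If their theorem assumes the conjecture for all smaller numbers of runners, rather than just $LRC(k-1)$, I will derive the smaller cases from $LRC(k-1)$. The standard argument is that a tuple of fewer speeds can be padded with repeated speeds, and the threshold $\frac{1}{k}$ dominates $\frac{1}{j+1}$ for $j\le k-1$.

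Given the per-coordinate bound $u_i\le \binom{k+1}{2}^{k-1}/k$, I will multiply over $i=1,\ldots,k$ to get $u_1\cdots u_k\le B_k$. To obtain the strict inequality, I will either use the strict form of the bound in \cite{Malikiosis}, or argue that equality would force every $u_i$ to equal $\binom{k+1}{2}^{k-1}/k$. That is impossible, since the coordinates are distinct and $k\ge3$; alternatively, when this value is not an integer, the bound is already strict.

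The main obstacle is the bookkeeping in the first step. I need to pin down exactly which quantity \cite{Malikiosis} bounds: a single coordinate, the smallest coordinate, or all coordinates after a unimodular reduction. If their theorem only bounds the speeds in some modified tuple, I will first transfer the bound back to the original $\ubf$ before taking the product. The arithmetic itself is routine.
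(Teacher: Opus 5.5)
\textbf{There is a genuine gap: the input you take from \cite{Malikiosis} is misremembered.} Their Theorem~1.1, of which this lemma is Rosenfeld's Corollary~3, does not bound each speed by $\binom{k+1}{2}^{k-1}/k$. It bounds the \emph{sum} of the speeds. Under $LRC(k-1)$, a counterexample $\ubf$ with $\gcd(\ubf)=1$ has $u_1+\cdots+u_k$ bounded by $S:=\binom{k+1}{2}^{k-1}$. This $\ell_1$-norm is the volume of the lattice zonotope in the covering-radius reformulation \cite{henze_covering_2017}.

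A per-coordinate bound carrying the extra factor $1/k$ does not follow from the sum bound. The best coordinatewise consequence is $u_i<S$, and multiplying these gives only $u_1\cdots u_k<S^k=\binom{k+1}{2}^{k(k-1)}=k^kB_k$, which misses by a factor $k^k$. So your step ``multiply over $i$'' cannot produce $B_k$. None of the alternatives in your last paragraph is the right quantity either: a single coordinate, the smallest coordinate, or a unimodularly reduced tuple.

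\textbf{The missing idea is AM--GM.} We have $u_1\cdots u_k\le\bigl(\tfrac1k(u_1+\cdots+u_k)\bigr)^k\le (S/k)^k=B_k$. The shape $(\,\cdot\,/k)^k$ of $B_k$ is exactly the maximum of a product of $k$ positive numbers with prescribed sum $S$, which is the tell-tale sign of where the $1/k$ comes from.

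Strictness then needs no distinctness argument. Equality would force $u_1=\cdots=u_k=S/k$. Since $\gcd(\ubf)=1$, this means all $u_i=1$, which is impossible because $S>k$ for $k\ge3$. Besides, $(1,\ldots,1)$ is not a counterexample: take $t=\tfrac12$.

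Your remaining bookkeeping is fine and still needed to match the hypotheses of \cite{Malikiosis}: translating the runner/velocity indexing, and obtaining smaller cases from $LRC(k-1)$ by padding with repeated speeds.
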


Therefore, if we satisfy the precondition of Corollary \ref{cor:evenpropdivp} for a large number of primes,
we can obtain a lower bound on the product of the speeds in any counterexamples. 
Together with the stated upper bound, we can conclude the absence of a counterexample.

For convenience, we consolidate the verification task into the following proposition,
which will be used to conclude the main theorem.
\begin{proposition}
\label{prop:verify}
  Let $k \ge 3$ and $P$ be a set of primes such that $J(k,p)=\emptyset$ for all $p\in P$. 
  If $LRC(k-1)$ holds and
  \[\prod_{p \in P} p\geq B_k,\] 
  then $LRC(k)$ is true.
\end{proposition}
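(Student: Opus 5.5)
We argue by contradiction and combine Corollary~\ref{cor:evenpropdivp} with Lemma~\ref{lem:proddivB}. Suppose $LRC(k-1)$ holds, $J(k,p)=\emptyset$ for every $p\in P$, and $\prod_{p\in P}p\ge B_k$, yet $LRC(k)$ fails.

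First we reduce to a primitive counterexample. Let $\ubf\in\Z_{>0}^k$ fail the LR property and set $g=\gcd(u_1,\ldots,u_k)$. Then $\ubf/g$ is a positive integer tuple with gcd $1$. It also fails the LR property. Indeed, if $t$ were a witness for $\ubf/g$, then $t/g$ would satisfy $\norm{(t/g)u_i}=\norm{t(u_i/g)}\ge\frac1{k+1}$ for all $i$, so $t/g$ would be a witness for $\ubf$. Hence we may assume $\gcd(u_1,\ldots,u_k)=1$.

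Next we bound the product from below. By Corollary~\ref{cor:evenpropdivp}, each $p\in P$ divides $u_1\cdots u_k$, since $k\ge3$, $LRC(k-1)$ holds, and $J(k,p)=\emptyset$. The elements of $P$ are distinct primes, so $\prod_{p\in P}p$ divides $u_1\cdots u_k$. This product is a positive integer, so
\[u_1\cdots u_k\ \ge\ \prod_{p\in P}p\ \ge\ B_k.\]
This step implicitly uses that $P$ is finite. If $P$ were infinite, the same divisibility argument would force a nonzero integer to be divisible by infinitely many primes, which is already a contradiction.

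Finally, Lemma~\ref{lem:proddivB} applies because $\gcd(u_1,\ldots,u_k)=1$ and $LRC(k-1)$ holds. It gives $u_1\cdots u_k<B_k$, contradicting the lower bound above. Hence no counterexample exists and $LRC(k)$ is true.

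There is no real obstacle here. The only points needing care are the gcd normalization, which is required because Lemma~\ref{lem:proddivB} applies only to primitive tuples, and the fact that distinct primes dividing the product give the divisibility of their product.
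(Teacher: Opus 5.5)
Your proof is correct and follows the paper's argument: you reduce to a counterexample with $\gcd(u_1,\ldots,u_k)=1$, use Corollary~\ref{cor:evenpropdivp} to get $\prod_{p\in P}p \mid u_1\cdots u_k$, and contradict the bound $u_1\cdots u_k<B_k$ from Lemma~\ref{lem:proddivB}. The paper only asserts the gcd normalization, and your rescaling of a witness by $1/g$ correctly supplies that missing step.
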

\begin{proof}
    Suppose that there exists a counterexample $\ubf$ to $LRC(k)$, which we may assume to have $\gcd(\ubf)=1$. 
    By Corollary \ref{cor:evenpropdivp}, $\prod_{p\in P}p$ divides $u_1\cdots u_k$ and so 
    \[u_1\cdots u_k\geq \prod_{p \in P} p\geq B_k>u_1\cdots u_k,\] 
    where the last inequality comes from Lemma \ref{lem:proddivB}. 
    This is a contradiction.
\end{proof}

\section{Sieving techniques} \label{sec:sieves}
In order to carry out our proof strategy, we need to verify that $J(k,p)=\emptyset$. 
The first approach is to check all tuples in $\Z^{k}_{lp}$ to verify that $I(k,p,l)=\emptyset$,
but this entails checking approximately $(lp)^k$ tuples. 
Moreover, for reasons to be given in Remark~\ref{rem:tightset},
we need $l\geq k+1$, so this approach becomes very laborious even for $k=8$ \cite{rosenfeld2025lonely2}.

Before we motivate them, let us at once formalize and validate the two techniques that allow for efficient verification in the following proposition.
\begin{proposition}
\label{prop:liftproj} 
    Fix an integer $k \ge 2$, prime $p$, and any integer $l$.
    Let $S\subseteq\Z_{p,l}^k$ be such that $J(k,p)\subseteq\pi_{p}S$. 
    We also have $J(k,p)\subseteq\pi_pS'$ if $S'$ is constructed in one of the following ways:
    \begin{itemize}
        \item (lifting with $c$) $S'=\{\vbf\in\pi_{clp\rightarrow lp}^{-1}S:\vbf\text{ is $(k,p,cl)$-improper}\}$
        \item (projecting) $S'=\pi_{lp\to p}S$.
    \end{itemize}
    \begin{proof}
        The second case is obvious. In the first case, note that
        \[\begin{split}
            \pi_pS'&=\pi_p \left( \pi_{clp\rightarrow lp}^{-1}S\cap I(k,p,cl) \right) \\
                   &=\pi_p\pi_{clp\rightarrow lp}^{-1}S\cap\pi_pI(k,p,cl) \\
                   &=\pi_pS\cap\pi_pI(k,p,cl).
        \end{split} \] 
        As $J(k,p)$ is a subset of $\pi_pS$ by assumption and a subset of $\pi_pI(k,p,cl)$ by definition, 
        it is also a subset of $\pi_pS'$.\footnote{This proof looks deceptively simple, and perhaps curiously, 
          does not utilize the lifting lemma  \cite[Lemma 7]{Trakulthongchai} anywhere. 
          This is because the real work (and the very same work as the lifting lemma) has been done 
          in Lemma~\ref{lem:evenprop} to show that 
          $J(k,p)=\emptyset$ is indeed the same as $I(k,p,l)=\emptyset$ for some $l$. 
          Without this fact, there is no divisibility result in Corollary~\ref{cor:evenpropdivp} 
          and hence determining $J(k,p)$ is pointless.
        }
    \end{proof}
\end{proposition}

The two actions are lifting, which we generalize from \cite{Trakulthongchai}, and backward projection, a new technique.
Proposition~\ref{prop:liftproj} ensures that both actions preserve the inclusion of $J(k,p)$ (under projection), 
which means that we cannot lose a non-eventually proper speed tuple in the process. 
Now, we will motivate each of them by their applications.

\subsection{Lifting}
\label{subsec:efflift}
Lifting is the main innovation of \cite{Trakulthongchai}. 
The process is described in the first construction of $S'$ in Proposition~\ref{prop:liftproj}: suppose we have a tuple in $\Z^k_{p,l}$,
we can \emph{lift} it to the fiber in $\Z^k_{p,cl}$, and then discard all proper speed tuples.

\begin{remark}
\label{rem:tightset}
    The speed tuple $\onetok$ is lonely precisely when $t=\frac{s}{k+1}$ with $\gcd(s,k+1)=1$. 
    Therefore, if $p>k+1$ and $k+1\nmid l$, then there is no such time in $\frac{1}{lp}\Z$,
    so $\onetok\in I(k,p,l)$. 
    In other words, if $p>k+1$, then $I(k,p,l)=\emptyset$ only if $k+1\mid l$. 
    As a consequence, if $k+1$ is prime and we start with tuples in $\Z^k_{p}$
    we need to lift with $k+1$ (or a multiple of $k+1$, but this would be suboptimal)
    at least once because we can eliminate $\onetok$ only once we get to $\Z^{k}_{p(k+1)}$.
\end{remark}

Even though it is impossible that $I(k,p,1)=\emptyset$, in view of applying Proposition~\ref{prop:liftproj}, 
we may still compute $I(k,p,1)$ first.
This is because $I(k, p, 1)$ is a non-trivial \emph{upper bound}, a superset under projection, of $J(k, p)$ 
that is very easy to compute as it requires checking only approximately $p^k$ speed tuples.
After obtaining that, we can lift with a divisor of $k+1$ to obtain a better upper bound.

Empirically, $|S|$ is found to be smaller as we lift\footnote{
  Even though lifting increases the number of speed tuples to be checked,
  it seems that the increased precision of the allowed time in the lifted ansatz has a greater effect \cite{Trakulthongchai}.
}. Thus, we should take $c$ to be small at first. 

Observe that lifting a set $S$ of tuples with a multiplier $c$ requires checking at most
\[ \left| \pi^{-1}_{clp \to lp} S \right| = c^k\cdot|S| \]
speed tuples.
Furthermore, as $k$ gets larger, the value of $c$ has a greater effect on the computational time, 
and the $c=2$ lift becomes comparatively cheap.

The key improvement from \cite{Trakulthongchai} is noting that, in fact, we can take this idea to the extreme.
Since the $c=2$ lift is much easier to compute, 
we can apply a $c=2$ lift first (and successively) whether or not $2$ is a prime factor of $k+1$.
This is done to trim down the number of speed tuples in $S$ before we apply the necessary lifts with larger multiplier $c>2$.
Even better, doing this will never increase the number of speed tuples at the end because we can always project backward,
as we will explain next.
 
\subsection{Backward projection}
\label{subsec:backproj}
Backward projection is just the application of $\pi_p$ to pull back the speed tuples to $\Z^k_{p}$.
Just as lifting can reduce the size of the search space, projection achieves a similar effect. 
This is due to the fact that for any set $S$,
\[|\pi_p S| \leq |S|.\]
The only time projecting becomes counterproductive is if we lift afterwards with the same constant $c$ that was previously used,
because then we are intersecting with the same improper set and hence no speed tuples can be further eliminated.

The strategy that we apply in practice (see Subsection~\ref{subsec:algo}), for $k+1$ prime,
is to lift with $c=2$ successively and project back.
The few remaining tuples are handled separately, as will be discussed in Section~\ref{sec:onetok}.
For $k+1$ composite, if we project before $k+1$ is reached, 
we are bound to miss eliminating some speed tuples. 
Although this can be remedied, to streamline the proof we choose not to do projection for $k=11$. 

We remark that this strategy can be extended to the following:
lift with $c=2$ successively and project back, then lift with $c=3$ successively and project back, and so on;
this extension is unnecessary for our present goal.

\section{Eventual properness of
  \texorpdfstring{$(1, 2,\ldots, k)$}{(1, 2, ..., k)}
} \label{sec:onetok}
According to Remark~\ref{rem:tightset}, when $k+1$ is prime, we have to lift with the multiplier $c=k+1$. 
This lift is extremely computationally intensive: for $k=12$, the $c=k+1$ lift requires checking $(13/2)^{12}\approx5\cdot10^9$ 
times more tuples than the $c=2$ lift (for fixed $p$ and input set).
If we are to verify the conjecture for $k\in\{10,12\}$ in a reasonable time, we need to avoid doing this lift computationally.
For many primes $p$, after lifting $I(10,p,1)$ with $c=2$ several times and projecting back,
the only tuples that cannot be shown eventually proper are, up to symmetry, $( 1, 2, \ldots, 10 )$. 
The analogous observation holds when $k=12$.

Therefore, the aim of this section is to determine the eventual properness
of $\onetok$. 
We emphasize that our method still requires \emph{lifting} with $c=k+1$. 
In fact, we are merely finding a solution for speed tuples in $\pi^{-1}_{(k+1)p\rightarrow p}\onetok$ in the $(k,p,k+1)$-ansatz,
but we do so analytically instead of computationally.

First, we prove the following Proposition \ref{prop:polynomial} which is the main ingredient used in this section.
This proposition is combinatorially interesting in its own right.
For convenience, we write 
\[ N_{k} := \{ \vbf \in \Z_{k+1}^k : \vbf \ne \mathbf0 \text{ and } \vbf \text{ has at least one zero coordinate}\}. \]
\begin{proposition}
\label{prop:polynomial}
    Let $k+1$ be an odd prime. For all vectors $\vbf \in N_k$,
    there are $s$ and $r$ in $\Unit{k+1}$ such that
    \[s\vbf + r\onetok  \in  \{1,\ldots,k-1\}^k\] as a vector in $\mathbb{Z}_{k+1}^k$.
\end{proposition}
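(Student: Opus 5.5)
The plan is to eliminate one of the two parameters and reduce the statement to a one-variable question over $\Z_q$, where $q:=k+1$. Writing $w_i:=sv_i+ri$, the requirement $s\vbf+r\onetok\in\{1,\dots,k-1\}^k$ says exactly that $w_i\notin\{0,-1\}$ in $\Z_q$ for every $i$. Put $a:=s/r\in\Unit{q}$ and $x_i:=av_i+i$, so that $w_i=rx_i$. The condition then splits in two. First, $x_i\neq 0$ for all $i$. Second, $r\notin\{-x_i^{-1}:1\le i\le k\}$.

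For a fixed admissible $a$, the second condition forbids at most $|\{x_1,\dots,x_k\}|$ values of $r$ out of the $q-1$ elements of $\Unit{q}$. Hence a suitable $r$ exists unless the $x_i$ are $k=q-1$ distinct nonzero residues, that is, unless $(x_1,\dots,x_k)$ is a permutation of $\Unit{q}$. So it suffices to find $a\in\Unit{q}$ such that
\[x_i=av_i+i\neq0 \text{ for all } i,\quad\text{and}\quad i\mapsto x_i \text{ is not a bijection onto } \Unit{q}.\]

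Now use the hypothesis $\vbf\in N_k$. Let $Z=\{i: v_i=0\}$, which is nonempty, and let $m=k-|Z|$. Since $\vbf\neq\mathbf 0$, we have $1\le m\le q-2$. For $i\in Z$ we get $x_i=i\neq0$ automatically. For $i\notin Z$ the condition $x_i=0$ excludes exactly one value $a=-i/v_i$. So at most $m\le q-2$ values of $a\in\Unit{q}$ are excluded by the first condition, and at least one admissible $a$ remains.

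If $\sum_iv_i\neq0$, we are done at once. Indeed, if the $x_i$ formed a permutation of $\Unit{q}$, then $\sum_i x_i=0=\sum_i i$, which forces $a\sum_iv_i=0$, impossible for $a\neq0$. Thus every admissible $a$ works.

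The main obstacle is the case $\sum_i v_i=0$, where I must bound the number $b$ of values $a\in\Unit{q}$ making $(x_i)$ a permutation, and show $m+b<q-1$. For each $e$ with $1\le e\le q-2$, a permutation satisfies $\sum_i x_i^e=\sum_i i^e$, i.e.
\[G_e(a):=\sum_{t=1}^{e}\binom{e}{t}a^t\sum_i v_i^t i^{e-t}=0.\]
Each $G_e$ has zero constant term, so its nonzero roots number at most $e-1$ provided $G_e\not\equiv0$. I would take the smallest $e$ with $G_e\not\equiv0$. Such an $e\le q-2$ should exist: if all these moment identities vanished identically, then the $m$ nonzero $v_i$ would have vanishing power sums up to order $q-2$. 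I would check that this contradicts $1\le m\le q-2$ via Newton's identities.

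This gives $b\le e-1$, which is only useful when $e$ is small, so the crude count may fall short. As a complement I would use Wilson's theorem, which gives the extra constraint $\prod_i x_i=-1$. Expanding $\prod_{i\notin Z}(av_i+i)\cdot\prod_{i\in Z}i=-1$ yields a polynomial in $a$ of degree exactly $m$ with vanishing constant term, hence at most $m-1$ nonzero roots.

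I would also exploit the interplay between the excluded values $-i/v_i$ and these roots, together with the symmetry $(s,r)\mapsto(-s,-r)$. If this counting still fails, I would fall back on Alon's Combinatorial Nullstellensatz applied to $\prod_{i\notin Z}(av_i+i)\cdot\prod_{i<i'}(x_i-x_{i'})$-type auxiliary polynomials. Making this last step airtight is where I expect the real difficulty to lie.
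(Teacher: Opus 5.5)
Your reduction is sound and matches the paper's opening step. With $a=s/r$ (the paper's $m$ is $1/a$), an admissible $a$ fails to produce a suitable $r$ only when $i\mapsto av_i+i$ is a bijection onto $\Unit{q}$, and at most $m\le q-2$ values of $a$ are inadmissible. Your treatment of the case $\sum_i v_i\neq 0$ is also correct. The gap is the case $\sum_i v_i=0$, which carries all the content, and your counting strategy cannot settle it. Requiring $m+b<q-1$ means that for $m=q-2$ you need $b=0$ exactly. When $\sum_i v_i=0$ you have $G_1\equiv 0$, so the bound $b\le e-1$ from the first nonvanishing $G_e$ is at best $b\le 1$. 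The Wilson bound $b\le m-1$ only helps when $2m-1<q-1$, i.e.\ $m\le (q-1)/2$. The Nullstellensatz fallback is not specified. So as written the proposition is not proved.

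The missing idea is to treat your Wilson polynomial $f(a)=\prod_i(av_i+i)$ as a function on all of $\Z_q$, rather than only counting its roots. Suppose the conclusion fails. Then $f(a)=-1$ at every admissible $a$ (by Wilson), $f(a)=0$ at every inadmissible $a\in\Unit{q}$, and $f(0)=k!=-1$. So $f$ takes only the values $0$ and $-1$ on $\Z_q$. Since $1\le\deg f=m\le q-2$ and $\sum_{a\in\Z_q}a^j=0$ for $0\le j\le q-2$, you get $\sum_{a\in\Z_q}f(a)=0$. Hence the number of $a\in\Z_q$ with $f(a)=-1$ is a multiple of $q$. It is at least $1$ because of $a=0$, so it equals $q$. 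Then $f+1$, a nonzero polynomial of degree $m<q$, vanishes on all of $\Z_q$, which is a contradiction.

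This is the paper's polynomial-method step in reciprocal coordinates. There, $P(X)=\prod_i(v_ii^{-1}+X)$ is monic of degree $k$ and coincides on all $k+1$ points with the indicator $\sum_{m\in G}(1-(X-m)^k)$ of the good units; here $P(0)=0$ comes from the zero coordinate. Comparing leading coefficients gives $|G|=k$, so there are no bad units and $\vbf=\mathbf 0$. This argument also makes your case split on $\sum_i v_i$ unnecessary.
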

\begin{proof}
    In this proof, we work in the field $\Z_{k+1}$.
    Define \[V_m = \{v_i+mi:i\in \{1,\ldots,k\} \}\] for $m \in \Unit{k+1}$.
    We say $m\in \Unit{k+1}$ is \emph{good} if $0 \notin V_m$ and \emph{bad} otherwise. 
    Let $G$ be the set of good $m$'s.

    Now, as $\vbf$ contains at least one zero coordinate, it contains at most $k-1$ non-zero coordinates.
    For each non-zero coordinate $v_i$, there is exactly one choice of $m\in\Unit{k+1}$ such that $v_i + m i = 0$, 
    whereas for each zero coordinate, there cannot be such $m$. 
    Thus, there are at most $k-1$ bad units.
    This implies that $|G| \ge 1$.

    We continue the proof by contrapositive. Assume that for all $r$ and $s$, 
    \[s\vbf + r\onetok  \notin  \{1,\ldots,k-1\}^k,\] then we will show that $\vbf = 0$.

    Let $m$ be good and $s\in\Unit{k+1}$. Then $r:=sm\in\Unit{k+1}$ and so, by assumption,
    \[ s\vbf + r\onetok \not\in \{1, \dots, k-1\}^k. \]
    Thus, there is an index $i$ such that $s (v_i + m i) \in \{0, -1\}$.
    As $m$ is good and $s\neq0$, it is impossible for the product to be $0$, and so it must be $-1$. Hence, 
    \[ v_i + m i = -s^{-1}. \]
    As $s$ was chosen arbitrarily, $V_m = \Unit{k+1}$.

    From $V_m = \Unit{k+1}$ when $m$ is good, we have that
    \[k! = \prod_{i=1}^k(v_i+mi) = \prod_{i=1}^k i \cdot \prod_{i=1}^k (v_i i^{-1} + m) = k!\prod_{i=1}^k (v_i i^{-1}+m),\]
    which gives \[\prod_{i=1}^k (v_i i^{-1} + m) = 1.\]
    On the other hand, when $m$ is bad, there is an index $i$ such that $v_i + m i = 0$, hence 
    \[ \prod_{i=1}^k (v_i i^{-1} + m) = \prod_{i=1}^k (v_i + m i) i^{-1} = 0. \]
    Therefore, the polynomial \[ P(X) := \prod_{i=1}^k (v_ii^{-1}+X) \] is the indicator function of $G$ on $\Unit{k+1}$.
    Note also that since $\vbf$ contains at least one zero coordinate, $P(0) = 0$.

    As $k+1$ is prime, Fermat's Little Theorem gives another indicator function of $G$, which is
    \[ Q(X) := \sum_{m \in G} (1-(X-m)^k), \] 
    where the sum is not empty as $|G| \ge 1$.
    Thus, $Q$ is a polynomial of degree $k$.
    
    As $P(0) = 0 = Q(0)$, $P$ and $Q$ are polynomials of degree at most $k$ that agree on $k+1$ points, hence $P = Q$.
    In particular, their leading coefficients must be equal, that is 
    $1 \equiv -|G| \pmod{k+1}$. Since $|G| \le k$ as integers, we must have that $|G| = k$.
    Therefore, there is no bad $m$. Specifically, for any $i\in\{1,\ldots,k\}$, $v_i+mi \neq 0$
    for all $m\in\Unit{k+1}$. 
    However, this implies $v_i = 0$ for all $i$, that is, $\vbf = \mathbf{0}$.
\end{proof}

The strategy we will later use to prove Proposition~\ref{prop:corlarge} is decoupling and limiting dependency on the prime $p$. 
Instead of considering times of the form $t=\frac{a}{(k+1)p}$, we decompose them into partial fractions $t=\frac{s}{k+1}+\frac{r}{p}$.
The former term has no dependence of $p$. To capture the effect of $p$ of the latter term, we define 
\[ \rbf_k(t):=\left( \lfloor (k+1) \fracpart{t} \rfloor, \; \lfloor (k+1)\fracpart{2t} \rfloor, \; \ldots, \; \lfloor  (k+1) \fracpart{kt} \rfloor \right) \] 
for $t\in\R$.
Intuitively, $\lfloor (k+1) \fracpart{it} \rfloor = x$ means that the runner with speed $i$ is in the arc 
$\left[ \frac{x}{k+1}, \frac{x+1}{k+1} \right)$ of the unit circle at time $t$.
With this discretization, we state Lemma~\ref{lem:resolvekplusone}.

\begin{lemma}
\label{lem:resolvekplusone}
    Let $k+1$ be an odd prime. For every $\vbf \in N_k$, there are $s \in \Z_{k+1}$ and $r \in \Z$ for which 
    \[ s\vbf + \rbf_k(\tfrac r{k+1}) \in \{1,\ldots,k-1\}^k\] when considered modulo $k+1$.
\end{lemma}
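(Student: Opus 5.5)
We deduce Lemma~\ref{lem:resolvekplusone} directly from Proposition~\ref{prop:polynomial}. The key step is to compute $\rbf_k(\tfrac{r}{k+1})$ explicitly for integers $r$ coprime to $k+1$.

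For such $r$, the time $t=\tfrac{r}{k+1}$ gives $\fracpart{it}=\tfrac{(ir \bmod (k+1))}{k+1}$, where $ir\bmod(k+1)$ denotes the standard representative in $\{0,\ldots,k\}$. Hence
\[ \lfloor (k+1)\fracpart{it}\rfloor = ir \bmod (k+1), \]
exactly, since this quantity is an integer. Reducing modulo $k+1$, we get $\rbf_k(\tfrac{r}{k+1})\equiv r\onetok \pmod{k+1}$. The identity in fact holds for every integer $r$, including $r\equiv 0$.

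Now fix $\vbf\in N_k$. Proposition~\ref{prop:polynomial} supplies $s,r'\in\Unit{k+1}$ with $s\vbf+r'\onetok\in\{1,\ldots,k-1\}^k$ in $\Z_{k+1}^k$. Take this $s$, viewed in $\Z_{k+1}$, and let $r$ be any integer representative of $r'$. By the computation above, $s\vbf+\rbf_k(\tfrac{r}{k+1})\equiv s\vbf+r'\onetok$ modulo $k+1$, and this lies in $\{1,\ldots,k-1\}^k$, which proves the lemma.

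There is no genuine obstacle here. The only point needing care is to verify that the floor in $\rbf_k$ introduces no rounding at times $t\in\tfrac{1}{k+1}\Z$. This holds because $(k+1)\fracpart{it}$ is then already an integer.
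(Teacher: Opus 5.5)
Your proof is correct and matches the paper's: you apply Proposition~\ref{prop:polynomial} and use the identity $\rbf_k(\tfrac{r}{k+1}) \equiv r\onetok \pmod{k+1}$, which holds because $(k+1)\fracpart{\tfrac{ir}{k+1}}$ is already an integer. You state explicitly why the floor causes no rounding, a point the paper's one-line computation leaves implicit.
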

\begin{proof}
  Let $\vbf$ be any speed tuple in $N_k$.
  By Proposition~\ref{prop:polynomial},
  there exist $s,r\in\Unit{k+1}\subseteq\Z_{k+1}$ such that
  \[s\vbf + r\onetok  \in \{1,\ldots,k-1\}^k.\] 
  By definition, \[\begin{split}
      \rbf_k\left(\tfrac{r}{k+1}\right) 
      &=\left(\left\lfloor (k+1)\fracpart*{\tfrac{r}{k+1}} \right\rfloor,\left\lfloor (k+1)\fracpart*{\tfrac{2r}{k+1}} \right\rfloor,\ldots, 
        \left\lfloor (k+1)\fracpart*{\tfrac{kr}{k+1}}\right\rfloor \right)\\
      &=(r,2r,\ldots,kr)\\&=r\onetok
  \end{split}\]
  modulo $k+1$. Therefore,
  \[s\vbf+\rbf_k \left( \tfrac{r}{k+1} \right) \in \{1,\ldots,k-1\}^k,\]
  modulo $k+1$.
\end{proof}

A corollary of Lemma~\ref{lem:resolvekplusone} is that $\onetok$ is eventually $(k,k+1)$-proper, but this is trivial.
Indeed, for $p=k+1$, we only need that \[s\vbf + r\onetok  \in  \{1,\ldots,k\}^k\] 
to show eventual $(k, p)$-properness of $\onetok$.
However, proving the stronger statement that 
\[s\vbf + r\onetok  \in  \{1,\ldots,k-1\}^k\]
in Proposition~\ref{prop:polynomial} allows us to extend the result to larger primes via the next lemma.

\begin{lemma}
\label{lem:resolvep}
    Let $k+1$ and $p > k(k+1)$ be odd primes. For every $\vbf \in N_k$,
    there are $s \in \Z_{k+1}$ and $r \in \Z$ for which 
    \[ s\vbf + \rbf_k(r/p) \in \{1,\ldots,k-1\}^k .\] 
    when considered modulo $k+1$.
\end{lemma}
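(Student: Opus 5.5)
We will deduce this from Lemma~\ref{lem:resolvekplusone}. The idea is to find an integer $r'$ such that $\rbf_k(r'/p)$ agrees with $\rbf_k(r/(k+1))$ modulo $k+1$, where $r\in\Unit{k+1}$ is the value supplied by Proposition~\ref{prop:polynomial}.

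Fix $\vbf\in N_k$. By Lemma~\ref{lem:resolvekplusone} there are $s$ and $r\in\Unit{k+1}$ with $s\vbf+r\onetok\in\{1,\ldots,k-1\}^k$ modulo $k+1$. We may take $1\le r\le k$. For each $i$, the number $ir/(k+1)$ is a rational with denominator $k+1$ whose fractional part is $\{ir\}_{k+1}/(k+1)$. Here $\{ir\}_{k+1}\in\{1,\ldots,k\}$ is nonzero because $k+1$ is prime and $1\le i\le k$. Consequently $\lfloor (k+1)\fracpart{ir/(k+1)}\rfloor = ir \bmod (k+1)$, and this value lies at the left endpoint of its arc $[x/(k+1),(x+1)/(k+1))$.

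The plan is to choose $t=r'/p$ slightly to the right of $r/(k+1)$, that is, $t=r/(k+1)+\delta$ with $0\le\delta$ small. Then every $it$ with $i\le k$ stays inside the same arc, and $\rbf_k(t)=\rbf_k(r/(k+1))$ exactly. The requirement is $i\delta<1/(k+1)$ for all $i\le k$, i.e.\ $\delta<\frac{1}{k(k+1)}$. We need a multiple of $1/p$ in the interval $[r/(k+1),\,r/(k+1)+\frac{1}{k(k+1)})$. This interval has length $\frac{1}{k(k+1)}>\frac1p$ because $p>k(k+1)$, so it contains some $r'/p$. Concretely, $r'=\lceil rp/(k+1)\rceil$ works: then $0\le r'/p-r/(k+1)<1/p<\frac{1}{k(k+1)}$.

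The one step needing care is the arc analysis. For each $i$, write $ir=q(k+1)+x$ with $1\le x\le k$. Then $ir'/p=q+\frac{x}{k+1}+i\delta$ with $0\le i\delta<\frac{k}{p}<\frac{1}{k+1}$. Since $x\le k$, the number $\frac{x}{k+1}+i\delta$ lies in $[\frac{x}{k+1},\frac{x+1}{k+1})\subseteq[0,1)$, which gives the fractional part and the floor directly. Hence $\rbf_k(r'/p)\equiv r\onetok$ modulo $k+1$, and so $s\vbf+\rbf_k(r'/p)\in\{1,\ldots,k-1\}^k$ as required. The hypothesis that $p$ is an odd prime is not actually used beyond $p>k(k+1)$. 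The margin $\{1,\ldots,k-1\}$ rather than $\{1,\ldots,k\}$ is not needed here either; it presumably matters in the subsequent step that combines this with $v$ modulo $p$.
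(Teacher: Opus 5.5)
Your proof is correct and takes essentially the paper's approach: the paper shows that $\rbf_k$ is constant on $[\tfrac{n}{k+1}, \tfrac{n}{k+1}+\tfrac{1}{k(k+1)})$ because its discontinuities are at the points $\tfrac{a}{i(k+1)}$, then picks a multiple of $\tfrac1p$ in that interval. Your explicit choice $r'=\lceil rp/(k+1)\rceil$ together with the direct arc computation is the same argument made concrete.
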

\begin{proof}
  From Lemma~\ref{lem:resolvekplusone}, we have that there is $s \in \Z_{k+1}$ and $r \in \Z$ for which
    \[ s\vbf + \rbf_k(r/(k+1)) \in \{1,\ldots,k-1\}^k.\]
  Thus, it suffices to show that $\rbf_k(\tfrac{1}{k+1} \Z) \subseteq \rbf_k(\tfrac{1}{p} \Z)$ for all $p>k(k+1)$.

  Fix $p$ to be a prime greater than $k(k+1)$. Notice that $\rbf_k$ is a piecewise constant function. Moreover, 
  $x$ is a discontinuity of $\rbf_k$ if and only if it is a discontinuity of $\left\lfloor (k+1) \fracpart{ix} \right\rfloor$ for some $i$.
  Those points of discontinuity are precisely points of the form  $\frac{a}{i(k+1)}$ for integer $a$ and $1 \le i \le k$.

  Choose an integer $n$ and denote $x := \tfrac{n}{k+1}$. Observe that $x$ is a discontinuity of $\rbf_k$ and let $y$ be 
  the smallest number greater than $x$ that is also a discontinuity of $\rbf_k$. We know that $y$ can be written as 
  $\tfrac{a}{i(k+1)}$ for some integer $a$ and $1 \le i \le k$, so it follows that 
  \[ y - x = \frac{a}{i(k+1)} - \frac{n}{k+1} = \frac{a - ni}{i(k+1)} \ge \frac{1}{i(k+1)} \ge \frac{1}{k(k+1)}. \]

  As $y$ was chosen to be the smallest such discontinuity, $\rbf_k$ is constant on the interval $[x, y)$. 
  Since the interval's width is at least $\tfrac{1}{k(k+1)} > \tfrac{1}{p}$, 
  there must be an integer $m$ such that $\tfrac{m}{p} \in [x, y)$.
  For such value $m$, $\rbf_k (\frac{n}{k+1}) = \rbf_k (\frac{m}{p})$.
  As $n$ is arbitrary, the desired inclusion follows.
\end{proof}

With this lemma, we finally obtain the headline result that assures eventual properness of $\onetok$ for $p > k(k+1)$.

\begin{proposition}
    \label{prop:corlarge}
    Suppose that $k+1$ and $p>k(k+1)$ are odd primes.
    Then, every speed tuple in $\pi_{(k+1)p\rightarrow p}^{-1}\onetok$ is $(k,p,k+1)$-proper, 
    so in particular $\onetok$ is eventually $(k,p)$-proper.
\end{proposition}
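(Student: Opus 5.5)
Fix $\ubf\in\pi_{(k+1)p\to p}^{-1}\onetok$, so $\ubf\in\Z_{(k+1)p}^k$ with $u_i\equiv i\pmod p$. We must show $\ubf$ is $(k,p,k+1)$-proper. Let $\vbf\in\Z_{k+1}^k$ be the reduction of $\ubf$ modulo $k+1$. By the Chinese Remainder Theorem, times $t\in\frac{1}{(k+1)p}\Z$ can be written as $t=\frac{s}{k+1}+\frac{r}{p}$ with $s\in\Z_{k+1}$ and $r\in\Z_p$. Then, modulo $1$,
\[ tu_i \equiv \frac{s v_i}{k+1} + \frac{r i}{p}, \]
since $u_i\equiv v_i\pmod{k+1}$ and $u_i\equiv i\pmod p$. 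We split into three cases according to $\vbf$.

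\emph{Case $\vbf=\mathbf 0$.} Then every $u_i$ is divisible by $k+1$, so $\gcd(k+1,u_1,\dots,\widehat{u_i},\dots,u_k)>1$ and the gcd condition of Definition~\ref{def:improper} holds.

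\emph{Case $\vbf$ has no zero coordinate.} Take $r=0$ and look for $s$ with $sv_i\notin\{0\}$ for all $i$, which yields $\norm{tu_i}\ge\frac1{k+1}$. Since every $v_i\neq0$ and $k+1$ is prime, $s=1$ already works: $\norm{v_i/(k+1)}\ge\frac1{k+1}$ whenever $v_i\not\equiv0$.

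\emph{Case $\vbf\in N_k$.} This is the main case. Apply Lemma~\ref{lem:resolvep} to obtain $s\in\Z_{k+1}$ and $r\in\Z$ with $s\vbf+\rbf_k(r/p)\in\{1,\dots,k-1\}^k$ modulo $k+1$. Write
\[ \fracpart{ri/p}=\frac{x_i+\theta_i}{k+1}, \qquad x_i=\lfloor (k+1)\fracpart{ri/p}\rfloor,\quad \theta_i\in[0,1). \]
Then
\[ tu_i\equiv \frac{sv_i+x_i+\theta_i}{k+1}\pmod 1. \]
Let $y_i$ be the representative of $sv_i+x_i$ in $\{0,\dots,k\}$; by the lemma, $y_i\in\{1,\dots,k-1\}$. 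Hence $\fracpart{tu_i}=\frac{y_i+\theta_i}{k+1}\in\left[\frac1{k+1},\frac{k}{k+1}\right)$, so $\norm{tu_i}\ge\frac1{k+1}$ for every $i$. This is exactly why Lemma~\ref{lem:resolvep} excludes the residue $k$: the fractional offset $\theta_i$ could otherwise push a runner into the forbidden arc.

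The only delicate points are the bookkeeping of the offsets $\theta_i$, handled above, and confirming that $t=\frac{s}{k+1}+\frac{r}{p}$ lies in $\frac1{(k+1)p}\Z$, which is immediate. Together the three cases show that every such $\ubf$ is $(k,p,k+1)$-proper. Thus $\onetok\notin\pi_{(k+1)p\to p}I(k,p,k+1)$, so $\onetok$ is eventually $(k,p)$-proper with $l=k+1$.
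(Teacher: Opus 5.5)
Your proof is correct and follows the paper's argument essentially step for step. You use the same decomposition $t=\frac{s}{k+1}+\frac{r}{p}$ and the same three-way case split on the reduction of $\ubf$ modulo $k+1$: the gcd condition when it is $\mathbf 0$, the choice $r=0,\ s=1$ when it has no zero coordinate, and Lemma~\ref{lem:resolvep} when it lies in $N_k$. Your offsets $\theta_i$ do exactly the job of the paper's $A_i,B_i,C_i$ bookkeeping.
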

\begin{proof}
    We will show that every $\ubf \in \pi_{p(k+1) \to p}^{-1}\onetok$ is $(k,p,k+1)$-proper, 
    and hence $\onetok$ is eventually $(k,p)$-proper.
    Let $u_i = a_ip+i$ for some $a_i\in\Z_{k+1}$ for $i=1,\ldots,k$.
    By considering a time of the form $t=\frac{s}{k+1}+\frac{r}{p}$ for $s\in\Z$ and $r\in\Z$, 
    note that the fractional part of $tu_i$ is given by
    \[ \fracpart{tu_i}=\fracpart*{\left(\frac{s}{k+1}+\frac{r}{p}\right)(a_ip+i)}=\fracpart*{\frac{su'_i}{k+1}+\frac{ri}{p}} \] 
    where $\ubf' := \pi_{p(k+1)\to k+1} \ubf$. 
    
    If $\ubf'=\mathbf0$, then it follows that $k+1\mid\gcd(\ubf)$, thus \[k+1\mid\gcd(k+1,\ubf)\]
    hence $\ubf$ is $(k,p,k+1)$-proper because it satisfies the gcd condition.

    If $\ubf'$ contains no zero coordinate, then pick $r=0$ and $s=1$ and we have 
    \[ \fracpart{tu_i}=\fracpart*{\frac{u'_i}{k+1}}. \]
    Since $u_i' \ne 0$, $\norm{tu_i} \ge \frac{1}{k+1}$ for all $i$.

    Otherwise, $\ubf'\in N_k$, so by Lemma~\ref{lem:resolvep} there is a choice of $r$ and $s$ such that 
    \[ s\ubf' + \rbf_k(r/p) \in \{1,\ldots,k-1\}^k \]
    when considered modulo $k+1$. 
    This means that for each coordinate $i$, there are $A_i$ and $B_i$ such that 
    $\fracpart{\frac{su'_i}{k+1}}=\frac{A_i}{k+1}$ and 
    $\fracpart{\frac{ri}{p}}\in\left[\frac{B_i}{k+1},\frac{B_i+1}{k+1}\right)$ 
    where $C_i:=A_i+B_i$ is in $\{1,\ldots,k-1\}$ when considered modulo $k+1$. 
    Therefore,
    \[\fracpart{tu_i}=\fracpart*{\frac{su'_i}{k+1}+\frac{ri}{p}}\in\left[\frac{C_i}{k+1},\frac{C_i+1}{k+1}\right)\]
    and in particular the set on the right-hand side does not intersect with $[0,\frac{1}{k+1}) \cup (\frac{k}{k+1},1)$.  
    Since this holds for every $i$, $\ubf$ has the LR property.

    As there exists a witness time $t=\frac{s}{k+1}+\frac{r}{p}\in\frac{1}{p(k+1)}\Z$, $\ubf$ is $(k,p,k+1)$-proper.
\end{proof}

Proposition \ref{prop:corlarge} implies Proposition \ref{prop:onetokintro}: 
suppose that $\gcd(\vbf)=1$ and $\pi_p(\vbf)=\onetok$, then we have shown in Proposition~\ref{prop:corlarge}
that $\pi_{(k+1)p}(\vbf)$ is $(k,p,k+1)$-proper. 
If it is proper due to the gcd condition, then as $\gcd(\vbf)=1$, by the pre-jump technique (see e.g. \cite[Lemma 3]{Rosenfeld}), 
$\vbf$ has the LR property. 
Otherwise, there is a time $t\in\frac{1}{(k+1)p}\Z$ such that $\norm{tv_i}\geq\frac{1}{k+1}$ for all $i$.

\section{Implementation}
\label{sec:algo}

\subsection{Equivalence of speed tuples}
\label{subsec:equiv}
It is clear that some speed tuples in an ansatz are essentially the same, so we only need to verify properness for one of them.
We say that two speed tuples $\ubf$ and $\vbf$ in $\Z_{p,1}^k$ are \emph{equivalent} if $\vbf$ can be obtained from $\ubf$ 
by performing a finite number of the following actions:
\begin{itemize}
    \item Permuting the coordinates of $\ubf$
    \item Flipping the sign of a coordinate $u_i$ to $-u_i$
    \item Multiplying $\ubf$ by some $a\in\Unit{p}$.
\end{itemize}

The first two actions are used in both \cite{Rosenfeld} and \cite{Trakulthongchai}.
For completeness, we show that this equivalence indeed preserves eventual properness.

\begin{proposition}
  \label{prop:equivproper}
  Let $p$ be prime and $\ubf,\vbf\in\Z_{p,1}^k$ be equivalent.
  If $\ubf$ is eventually $(k,p)$-proper, then so is $\vbf$.
\end{proposition}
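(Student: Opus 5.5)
It suffices to treat a single elementary action, since equivalence is generated by finitely many such actions and eventual properness would then propagate along the chain. In each case, assume $\ubf$ is eventually $(k,p)$-proper, witnessed by some $l$ with $\ubf\notin\pi_{lp\to p}I(k,p,l)$. This means every lift $\ubf'\in\pi_{lp\to p}^{-1}\ubf\subseteq\Z_{p,l}^k$ is $(k,p,l)$-proper. For each action I will exhibit an integer $l'$ and a bijection $\phi$ from the fibre $\pi_{l'p\to p}^{-1}\vbf$ onto the fibre $\pi_{lp\to p}^{-1}\ubf$, or at least a map into it, such that properness of $\phi(\vbf')$ implies properness of $\vbf'$.

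\textbf{Permutations.} Take $l'=l$ and let $\phi$ apply the inverse permutation. Both conditions in Definition~\ref{def:improper} are symmetric in the coordinates: the gcd condition ranges over all $i$, and the time condition is a condition on all coordinates. So properness transfers directly.

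\textbf{Sign flips.} Take $l'=l$ and flip the same coordinate in $\Z_{lp}$. We have $\gcd(l,\dots,-v_j,\dots)=\gcd(l,\dots,v_j,\dots)$, and $\norm{-tv_j}=\norm{tv_j}$, so the same witness time $t$ works.

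\textbf{Multiplication by $a\in\Unit{p}$.} This is the main obstacle. Multiplication by $a$ is not invertible modulo $lp$ when $\gcd(a,l)>1$, and it changes the gcd condition. I plan to lift $a$ to a unit of $\Z_{lp}$. By the Chinese remainder theorem for $p\nmid l$, or more generally by the surjectivity of $\Unit{lp}\to\Unit{p}$, there exists $\tilde a\in\Z$ with $\tilde a\equiv a\pmod p$ and $\gcd(\tilde a,lp)=1$. Then $\vbf=a\ubf$ in $\Z_p$, and multiplication by $\tilde a$ is a bijection of $\Z_{lp}^k$ that preserves $\Z_{p,l}^k$. It maps the fibre over $\ubf$ onto the fibre over $\vbf$, because it commutes with $\pi_{lp\to p}$.

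Now let $\vbf'=\tilde a\ubf'$ with $\ubf'$ proper. If the gcd condition holds for $\ubf'$, then because $\tilde a$ is coprime to $l$, we get $\gcd(l,\tilde a u'_j:j\ne i)=\gcd(l,u'_j:j\ne i)>1$. Otherwise, let $t$ be a witness for $\ubf'$, and take $t'=\tilde a^{-1}t$ computed in $\frac{1}{lp}\Z/\Z$, where $\tilde a^{-1}$ is the inverse modulo $lp$. Then $t'\tilde a u'_i\equiv tu'_i\pmod 1$, so $\norm{t'v'_i}=\norm{tu'_i}\ge\frac1{k+1}$. Hence every element of the fibre over $\vbf$ is $(k,p,l)$-proper, and $\vbf$ is eventually proper with the same $l$.

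The only subtlety is checking that these quantities are well defined modulo $lp$. This holds because $t\in\frac1{lp}\Z$ and $tu$ modulo $1$ depends only on $u$ modulo $lp$.
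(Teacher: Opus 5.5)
Your proof is correct and follows the paper's argument. Like the paper, you treat each generating action separately, negate the relevant coordinate modulo $lp$ for sign flips, and for multiplication by $a$ lift $a$ to a unit of $\Z_{lp}$ and rescale the witness time; the only difference is the choice of lift, since the paper uses the Chinese remainder theorem to take $b\equiv a^{-1}\pmod p$ and $b\equiv 1\pmod l$ (so the gcd condition transfers verbatim), whereas your arbitrary unit lift $\tilde a$ handles the gcd condition through $\gcd(\tilde a,l)=1$ and, via surjectivity of $\Unit{lp}\to\Unit{p}$, also covers the case $p\mid l$, which the paper's CRT choice does not.
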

\begin{proof}
  We only need to show that each of the aforementioned actions preserves eventual properness.
  This is trivial for permuting.
  Suppose $\ubf$ is eventually proper and $l$ is the integer for which every element in $\pi^{-1}(\ubf)$ is proper,
  where $\pi:=\pi_{lp\rightarrow p}$. Now let $\vbf'\in \pi^{-1}(\vbf)$.

  If $\vbf=(u_1,\ldots,-u_i,\ldots,u_k)$, 
  then $\vbf'=(a_1p+u_1,\ldots,a_ip-u_i,\ldots,a_kp+u_k)$ for some integers $a_1,\ldots,a_k$. 
  In this case, pick 
  \[\ubf':=(a_1p+u_1,\ldots,(l-a_i)p+u_i,\ldots,a_kp+u_k)\in\pi^{-1}(\ubf).\]

  If $\vbf=a\ubf$ for some $a\in\Unit{p}$, 
  then by Chinese Remainder Theorem there is $b\in\Z_{lp}$ such that $b\equiv a^{-1} \pmod{p}$ and $b\equiv1\pmod{l}$. 
  In particular, $b\in\Unit{lp}$ and 
  \[\ubf':=b\vbf'\in\pi^{-1}(\ubf)\]
  because $\pi(b\vbf')=\pi(b)\pi(\vbf')=a^{-1}(a\ubf)=\ubf$. 
  We need $b$ to be a unit because if a time $t$ is a witness for $\ubf$, 
  then the time $b^{-1}t$ is a witness for $\vbf$.

  Then, whichever way $\ubf'$ satisfies properness,
  it can be checked that $\vbf'$ also satisfies properness in the same way.
  Therefore every speed tuple in $\pi^{-1}(\vbf)$ is proper, and thus $\vbf$ is eventually proper.
\end{proof}

Therefore, we may only verify the eventual properness for a single element in each equivalence class. 
We implement this by ordering the speeds ascendingly,
limiting the speeds to be in the first half of residues modulo $p$, and fixing $v_1$ to be $1$.
In other words, we check all tuples of integers $(v_1,\ldots,v_k)$ with 
\[1=v_1\leq v_2\leq\cdots\leq v_k\leq\frac{p-1}{2}.\] 
% This implementation results in at most $k$ speed tuples per equivalence class, 
% so there is a possibility to improve the run time by a factor of $k$. 
% However, we are unable to find a way to generate a single element from each equivalence class.  

We should stress that this reduction is essential to obtaining our results. 
There are approximately $p^k$ tuples in the $(k,p,1)$-ansatz,
but with this reduction we only need to check a much smaller number of approximately
\[\binom{p/2}{k-1}\approx\frac{p^{k-1}}{2^{k-1}(k-1)!}.\]
Compared to the implementation of \cite{Rosenfeld} and \cite{Trakulthongchai}, 
we reduce the number of tuples to check by a factor of approximately $\frac{p}{2k}$. 

\subsection{Verification algorithms}
\label{subsec:algo}
Fix $k\in\{10,11,12\}$ and a prime $p$. Our goal is to verify that $J(k,p)=\emptyset$
in order to apply Corollary \ref{cor:evenpropdivp}. To do this, we use a verification algorithm that consists of two phases:
computing $I(k, p, 1)$ and repeated lifting.

The first phase is done by choosing representative speed tuples as described in the previous subsection. 
We use lifting diagram to explain the second phase. In a lifting diagram, the first set $S_1$ is always the precomputed $I(k,p,1)$. 
Two kinds of arrows correspond to two actions from Section \ref{sec:sieves}:
\begin{itemize}
    \item $S_n\xrightarrow{\times c}S_{n+1}$ indicates lifting from $S_n$ to $S_{n+1}$ with constant $c$.
    \item $S_n\xrightarrow{\div d}S_{n+1}$ indicates projecting with $\pi_{dp\rightarrow p}$.
\end{itemize}
Since we only perform these two actions, Proposition~\ref{prop:liftproj} inductively asserts that 
$J(k,p) \subseteq \pi_p S_n$ for every $n$. 

For $k=11$, since $k+1=12$ only has small prime factors, namely $2$ and $3$,
it is convenient to lift through its prime factors (and some extra sieves). Hence, the lifting diagram is
\[S_1\xrightarrow{\times 2}S_2\xrightarrow{\times 2}S_3\xrightarrow{\times 2}S_4\xrightarrow{\times 2}S_5\xrightarrow{\times 3}S_6\xrightarrow{\times 3}S_7.\]
If $S_7$ is empty, we conclude that $J(k,p)\subseteq\pi_pS_7$ is empty.

For $k\in\{10,12\}$, $k+1$ is prime. 
In order to avoid the lift with $c=k+1$, we lower the goal of the lifting phase from showing that the final set is empty 
to showing that it only consists of equivalents of $\onetok$.
Therefore, the lifting diagram for $k = 10$ and $k=12$ is 
\[ S_1\xrightarrow{\times 2}S_2\xrightarrow{\times 2}S_3\xrightarrow{\times 2}S_4\xrightarrow{\div 8}S_5 \] 
which in particular does not contain a $c=11$ or $c=13$ lift.

As $k+1$ is prime, Proposition \ref{prop:corlarge} asserts that $\onetok\notin J(k,p)$ if $p>k^2+k$.
This is why we only use $p>k^2+k$ for $k\in\{10,12\}$.\footnote{
  In fact, we can use some primes $p<k^2+k$ if $\rbf_k(\frac{1}{k+1}\Z)\subseteq\rbf_k(\frac{1}{p}\Z)$.
  We have checked, by imitating the proof of Lemma~\ref{lem:resolvep}, 
  that this inclusion holds for $p \in \{103, 107, 109\}$ when $k=10$, and $p \in \{149,151\}$ when $k=12$,
  and for these $k,p$, $J(k,p)=\emptyset$.
} Therefore, if there are only speed tuples equivalent to $\onetok$ remaining in $S_5$, we can verify the emptiness of $J(k,p)$.

\subsection{Further particulars}
\label{subsec:particulars}
We implement the algorithms of Subsection \ref{subsec:algo} in \texttt{C++}.
Compared to the original implementations of \cite{rosenfeldcode} and \cite{tanupatcode},
we improve practical performance by refactoring to support parallel execution and by using more memory-efficient data structures. The
development of our code was assisted by AI-based tools, but the process was closely supervised
and the code was manually verified. 
We also tested our implementation against that of~\cite{tanupatcode} on a number of smaller cases.

In computing $S_1$, we modified the depth-first-search technique developed by \cite{rosenfeldcode} to account for the equivalence relation
mentioned in Subsection~\ref{subsec:equiv}.

These refinements dramatically improve runtime. We verified the case of $k=8$ in 2 seconds (15 minutes using \cite{tanupatcode}), 
$k=9$ in 41 seconds (23 hours using \cite{tanupatcode}), and $k=10$ in 45 minutes on a 10-core Apple M4 processor
(excluding compilation time). 
The proofs of $k=11$ and $k=12$ were run in multiple batches, with multiple versions, 
and on multiple machines, making the exact time taken incomparable to other cases. 

A heuristic estimate from the smaller cases is that the time required to verify the emptiness of $J(k, p)$ grows in proportion to $\frac{p^{(k+1)/2}}{k2^k}$, 
which improves over \cite{rosenfeld2025lonely2}\footnote{Rosenfeld \cite{rosenfeld2025lonely2} reported heuristic asymptotic runtimes for $k=8,9$. 
For $k=8$, our runtime is $\Theta(p^{4.5})$ compared to his $\Theta(p^{5.7})$, and for $k=9$, our runtime is $\Theta(p^5)$ compared to his $\Theta(p^{6.5})$, 
ignoring the constant term depending only on $k$. An improvement by a factor of $p$ is due to the reduction modulo $p$ in Subsection \ref{subsec:equiv}, 
and the remaining likely arises from sieving.}. Extrapolating this model suggests that completing the proof for $k=11$ will take approximately 40 hours, 
and for $k=12$ approximately 40 days, on the same 10-core machine.

The source of the computation, along with the log files and detailed descriptions, 
can be found in the first author's GitHub repository \cite{mycode}.

\section{Proof of Theorem \ref{thm:main}}
\label{sec:result}
Computer verification shows that, for each $k\in\{10,11,12\}$, the set $P_k$ in Table~\ref{tab:primes} satisfies
$J(k,p) = \emptyset$ for every $p\in P_k$. Moreover, for each $k$, we have that $\prod_{p \in P_k} p\geq B_k$. 
Since $LRC(9)$ holds \cite{Trakulthongchai},
we apply Proposition \ref{prop:verify} thrice and conclude that $LRC(k)$ holds for $k\in\{10,11,12\}$.

\begin{table}[ht]
\centering
\begin{tabular}{|c|l|c|c|}
\hline
$k$ & \multicolumn{1}{c|}{$P_k$}  
    & $\ln\displaystyle\prod_{p\in P_k}p$ & 
    \begin{tabular} [c]{@{}c@{}}$\ln B_k$\\ (Lemma~\ref{lem:proddivB}) \end{tabular} \\ \hline
10  & \begin{tabular}[c]{@{}l@{}}
    127, 131, 137, 139, 149, 151, 157, 163, 167, 173, 179, 181, 191, \\
    193, 197, 199, 211, 223, 227, 229, 233, 239, 241, 251, 257, 263, \\
    269, 271, 277, 281, 283, 293, 307, 311, 313, 317, 331, 337, 347, \\
    349, 353, 359, 367, 373, 379, 383, 389, 397, 401, 409, 419, 421, \\
    431, 433, 439, 443, 449, 457, 461, 463, 467
      \end{tabular}                                       
    & $>342$  & $<338$ \\ \hline
11  & \begin{tabular}[c]{@{}l@{}}
      23, 131, 137, 139, 149, 151, 157, 163, 167, 173, 179, 181, 191,  \\ 
      193, 197, 199, 211, 223, 227, 229, 233, 239, 241, 251, 257, 263, \\ 
      269, 271, 277, 281, 283, 293, 307, 311, 313, 317, 331, 337, 347, \\ 
      349, 353, 359, 367, 373, 379, 383, 389, 397, 401, 409, 419, 421, \\ 
      431, 433, 439, 443, 449, 457, 461, 463, 467, 479, 487, 491, 499, \\
      503, 509, 521, 523, 541, 547, 557, 563, 569, 571, 577
      \end{tabular}                                                                              
    & $>435$ & $<435$ \\ \hline
12  & \begin{tabular}[c]{@{}l@{}}
    167, 179, 181, 191, 193, 197, 199, 211, 223, 227, 229, 233, 239, \\
    241, 251, 257, 263, 269, 271, 277, 281, 283, 293, 307, 311, 313, \\ 
    317, 331, 337, 347, 349, 353, 359, 367, 373, 379, 383, 389, 397, \\ 
    401, 409, 419, 421, 431, 433, 439, 443, 449, 457, 461, 463, 467, \\ 
    479, 487, 491, 499, 503, 509, 521, 523, 541, 547, 557, 563, 569, \\ 
    571, 577, 587, 593, 599, 601, 607, 613, 617, 619, 631, 641, 643, \\ 
    647, 653, 659, 661, 673, 677, 683, 691, 701, 709, 719, 727, 733
      \end{tabular} & $>547$ & $<546$ \\ \hline
\end{tabular}

\vspace{5pt}
\caption{Sets of primes used in the verification of $LRC(k)$ for $k\in\{10,11,12\}$.}
\label{tab:primes}
\end{table}

\section{Concluding remarks}
\label{sec:conclude}
\subsection{Non-tight speed tuples in an ansatz}
For a speed tuple $\vbf$, define the maximum loneliness function \[\ml(\vbf)=\sup_{t\in\R}\min_{i\in\{1,\ldots,k\}}\norm{tv_i}\] 
(as $\min_{i\in\{1, \ldots, k\}} \norm{tv_i}$ is continuous on $\R/\Z$, $\sup$ can be replaced by $\max$).
Then $LRC(k)$ corresponds to the inequality $\ml(\vbf)\geq\frac{1}{k+1}$ for every $\vbf\in\Z_{>0}^k$. 
Goddyn and Wong \cite{goddyn2006tight} studied \emph{tight} speed tuples, namely speed tuples where the equality is attained. 
Indeed, tightness is precisely the condition that necessitates a sieve with $l=k+1$, as described in Remark \ref{rem:tightset}. 
Luckily for us, tight speed tuples are vanishingly rare, 
the only known (and likely only existing) tight speed tuples for $k\in\{10,11,12\}$ being $\onetok$.

Nonetheless, the rarity of tight speed tuples does not explain why non-tight speed tuples can be sieved out so easily 
(with only three $c=2$ lifts for our prime $k+1$ case). We have the following heuristic explanation.

An easy continuity argument shows that if $\vbf$ is not tight, 
then $LRC(k)$ implies that there are $T$ and $\varepsilon>0$ such that every $t\in(T,T+\varepsilon)$ is a witness time for $\vbf$,
so there is $L$ (depending on $\vbf$) such that for any $l\geq L$ there is a witness time in $\frac{1}{lp}\Z$. 
In fact, it turns out that for $p$ sufficiently large, we can choose $l=1$ to eliminate every non-tight $\vbf$ from an ansatz.

\begin{proposition}
\label{prop:lrcimpliestight}
  For a fixed $k$, $LRC(k)$ holds if and only if the following holds: There exists a constant $p_k$ such that, for each prime $p\geq p_k$, 
  every speed tuple $\vbf\in\Z_{p,1}^k$ not equivalent to a tight speed tuple $\ubf\in\Z_{p,1}^k$ has a witness time in $\frac{1}{p}\Z$.
\end{proposition}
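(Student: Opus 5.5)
The plan is to prove the two directions separately. The reverse direction is short; the forward direction carries the real content and needs a geometry-of-numbers argument in the torus $\R^k/\Z^k$.

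\emph{Reverse direction.} Let $\ubf\in\Z_{>0}^k$ and take a prime $p\geq p_k$ larger than every $u_i$ and every $2\times2$ minor $u_iw_j-u_jw_i$ that can arise below. Then $\vbf=\pi_p\ubf\in\Z_{p,1}^k$. If $\vbf$ is not equivalent to a tight tuple, it has a witness $t\in\frac1p\Z$, and since $\norm{tu_i}$ depends only on $u_i \bmod p$ for such $t$, this is a witness for $\ubf$. Otherwise $\vbf\equiv c\,\sigma(\pm\mathbf{w})\pmod p$ for some tight integer tuple $\mathbf{w}$. In that case all $2\times2$ minors between $\ubf$ and $\sigma(\pm\mathbf{w})$ vanish modulo $p$, hence vanish, because $p$ exceeds them. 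So $\ubf$ is a rational multiple of a signed permutation of $\mathbf{w}$ and inherits the LR property. I am interpreting ``tight tuple in $\Z_{p,1}^k$'' as the reduction of a bounded integer tight tuple; this is the point where the precise reading of the statement matters, and I would fix it first.

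\emph{Forward direction.} Assume $LRC(k)$ and fix $\vbf\in\Z_{p,1}^k$. The times $a/p$ give the cyclic subgroup $\{a\vbf/p\}$ of the torus, whose preimage in $\R^k$ is the lattice $\Lambda=\Z^k+\Z\vbf/p$. Its dual is $\Lambda^*=\{\mathbf m\in\Z^k:\mathbf m\cdot\vbf\equiv0 \pmod p\}$, of determinant $p$. We need $\Lambda$ to meet the open cube $Q=(\frac1{k+1},\frac k{k+1})^k$ modulo $\Z^k$. Fix a large constant $R=R(k)$ and let $j$ be the number of linearly independent vectors of $\Lambda^*$ of norm at most $R$.
\begin{itemize}
\item If $j=0$, the transference bound $\mu(\Lambda)\lambda_1(\Lambda^*)\ll k$ makes the covering radius of $\Lambda$ smaller than the inradius $\frac{k-1}{2(k+1)}$ of $Q$, so $Q$ contains a lattice point.
\item If $j=k-1$, the short relations span a rank-$(k-1)$ sublattice. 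For $p$ larger than its minors, this rank persists modulo $p$, so $\vbf\equiv c\,\ubf\pmod p$, where $\ubf$ is the primitive integer vector orthogonal to all the relations. Then $\ubf$ has bounded size, and its coordinates are nonzero modulo $p$ because $v_i\neq0$. Hence $\vbf$ is equivalent to $\pi_p\ubf$. If $\ubf$ is tight we are in the excluded case. Otherwise $\ml(\ubf)>\frac1{k+1}$, so by continuity there is a whole interval of witness times of length $\varepsilon(\ubf)>0$. As there are only finitely many bounded $\ubf$, $\varepsilon$ is uniform, and for $p>1/\varepsilon$ the interval contains some $b/p$; multiplying by the unit $c^{-1}$ (as in Proposition~\ref{prop:equivproper}) transfers this witness to $\vbf$.
\end{itemize}

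\emph{The intermediate case $1\le j\le k-2$ is the main obstacle.} I would try the following. The short relations cut out a rational subtorus $H$ of dimension $k-j\ge2$. Since the remaining successive minima of $\Lambda^*$ exceed $R$, the points $a\vbf/p$ are $\delta(R)$-dense in $H$, by transference applied to $\Lambda^*$ modulo the short sublattice. It then suffices that $H\cap Q\neq\emptyset$, robustly with a uniform margin; finitely many $H$ occur since the relations are bounded. A subtorus of dimension at least $2$ contains the lines $t\mathbf{w}$ for many primitive integer $\mathbf{w}$ with nonzero coordinates. By $LRC(k)$ each such line meets the closed cube, and a non-tight one meets the open cube $Q$. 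So I need some non-tight $\mathbf{w}\in H$. I would argue that tight directions cannot fill a $2$-dimensional family: the result of Goddyn and Wong \cite{goddyn2006tight} on the structure of tight tuples should make them sparse, or failing that, a perturbation of $\mathbf{w}$ inside $H$ destroys tightness. Taking $p_k$ large enough to accommodate all the finitely many constants $R$, $\delta$ and $\varepsilon$ then completes the proof.
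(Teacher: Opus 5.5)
Your forward direction has a genuine gap exactly where you flag it: the case $1\le j\le k-2$ is not proved. To close it you need every rational subtorus $H$ of dimension at least $2$ occurring there to satisfy $\sup_{\mathbf{x}\in H}\min_i\norm{x_i}>\frac1{k+1}$ with a uniform margin. Equivalently, the integer directions inside $H$ must not all be tight. Neither justification you offer supplies this. The work of Goddyn and Wong \cite{goddyn2006tight} does not provide such a statement. And ``a perturbation inside $H$ destroys tightness'' is precisely the claim to be proved: it would follow from finiteness of tight tuples up to scaling, applied to the directions $\mathbf{w}_1+n\mathbf{w}_2$, but that is an extra structural theorem you have not provided. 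There is also a secondary issue. With a fixed $R$, vectors of $\Lambda^*$ of norm just above $R$ may lie very close to the span of the short ones, so the points $a\vbf/p$ need not be $\delta$-dense in $H$ unless $R$ is chosen at a large gap between consecutive successive minima. As for the reverse direction, the paper's sketch does not address it. Your argument there works only under the reading you flag: a tight tuple in $\Z_{p,1}^k$ may a priori have entries as large as $p-1$, and then the minors $u_iw_j-u_jw_i$ need not be smaller than $p$.

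The missing idea, which makes the successive-minima analysis unnecessary, is to exploit the scaling part of the equivalence relation. Apply Dirichlet's pigeonhole argument to the points $a\vbf/p$, $0\le a<p$. This gives $a\in\Unit{p}$ such that $a\vbf$ has signed residues $\mathbf{w}$ with $\max_i|w_i|=O(p^{1-1/k})$. In your language, $\Lambda$ contains the short vector $\mathbf{w}/p$, and $\Lambda/\Z^k$ is just the $p$ equally spaced points $\frac bp\mathbf{w}$ on one closed orbit, so no higher-dimensional subtorus ever enters. The positive tuple $\ubf=(|w_1|,\dots,|w_k|)$ is equivalent to $\vbf$, hence not tight. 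The paper then imports the uniform gap of \cite[Theorem 1.4]{giri_structure_2026}: under $LRC(k)$, $\ml(\ubf)\ge\frac1{k+1}+\varepsilon_k$ for every non-tight $\ubf$. Since $t\mapsto\min_i\norm{tu_i}$ is $(\max_iu_i)$-Lipschitz, the witness times of $\ubf$ contain an interval of length at least $2\varepsilon_k/\max_iu_i=\Omega(p^{-1+1/k})$. This exceeds $\frac1p$ for large $p$, so some $b/p$ is a witness for $\ubf$, and then $ab/p$ is a witness for $\vbf$. Your cases $j=0$ and $j=k-1$ are correct, but this argument subsumes them, and its only external input is the uniform gap.
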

\begin{proof}[Proof sketch (from ChatGPT-5.6 Pro)]
    Assume $k\geq2$ and $LRC(k)$. There is a uniform $\varepsilon_k>0$ such that $\ml(\vbf)\geq\frac{1}{k+1}+\varepsilon_k$ for every non-tight $\vbf$ \cite[Theorem 1.4]{giri_structure_2026}. For any $\vbf\in\Z_{p,1}^k$, by a standard pigeonhole argument we can find $\ubf\in\Z_{p,1}^k$ equivalent to $\vbf$ with $\max(\ubf)\leq O(p^{1-1/k})$. If $\ubf$ is not tight, there is a witness time interval of length $\Omega(\frac{2\varepsilon_k}{p^{1-1/k}})$ which is certainly larger that $\frac{1}{p}$ when $p$ is large, so there is a witness time in $\frac{1}{p}\Z_p$. By equivalence $\vbf$ also has a witness time in $\frac{1}{p}\Z_p$.
\end{proof}
The speed tuples behave increasingly nicely as $p$ gets larger because 
the $\norm{\cdot}_\infty$ norm of the smallest tuple in each equivalence class as a proportion of $p$ decreases. Still, the constant $p_k$ given by the proof here is enormous. 
From \cite[Theorem 3.1]{kravitz2021barely}, we have $\frac{s}{ks+1}\in\mathcal{\Tilde{S}}(k)$ for every $s\in\Z_{>0}$, hence
\[\varepsilon_k\leq\frac{2}{2k+1}-\frac{1}{k+1}<\frac{1}{2k^2}.\] 
By this method we must then choose $p_k\approx(2\varepsilon_k)^{-k}>k^{2k}$.\footnote{The similarity between this linear-exponential form and the one in \cite[Theorem A]{Malikiosis} is no coincidence. Both arise from a gap of order $k^{-2}$ and a pigeonhole argument in $k+O(1)$ dimension.} 
This is much larger than what the empirical evidence suggests (which is in the hundreds for our $k$); it would be interesting to see how this gap can be closed.

\subsection{Next cases}
Proposition \ref{prop:polynomial} brings into picture a finite field reduction of some cases of the conjecture. 
The algebraic method there may be generalizable to verify Conjecture \ref{lonelyconjred} for other classes of speed tuples. 
This method is particularly useful when the speeds are small, so improving this could be complementary to the analytic methods that are powerful when the speeds are large.

Looking ahead, from the same heuristic estimate as in Subsection \ref{subsec:particulars}, our algorithm will take $15$ years on our original machine to verify $LRC(13)$. 
The primary bottleneck here is the efficient computation of $I(k, p, 1)$. 
Progress then likely requires a better understanding of speed tuples that do not have a witness time in an ansatz, 
which in turn could yield stronger pruning conditions and make the initial sieve more tractable. 

\begin{comment}
\section*{AI disclosure}
The proof of Proposition \ref{prop:lrcimpliestight} was found by ChatGPT-5.6 Sol. ChatGPT-5.3 Instant was used to assist with code development in \cite{mycode}, 
but the process was closely supervised and the code was manually verified.
\end{comment}

\section*{Acknowledgements}
The authors are most thankful to Noah Kravitz for his insightful feedback and suggestions, 
especially on the exposition of the paper. We thank Jörg Wills, Jannis Dietrich, and the referees for helpful comments.
The second author acknowledges the support from the King's Scholarship (Thailand).

\bibliographystyle{amsplain}
\bibliography{ref}

\end{document}